\renewcommand*{\eqref}[1]{
	\hyperref[{#1}]{\textup{\tagform@{\ref*{#1}}}}}
\newcommand{\inj}{\mathrm{inj}}
\newcommand{\eps}{\theta}
\newcommand{\beq}{\begin{equation}}
\newcommand{\beqn}{\begin{equation*}}
\newcommand{\eeq}{\end{equation}}
\newcommand{\eeqn}{\end{equation*}}
\newcommand{\R}{\mathbb{R}}
\newcommand{\n}{\mathbb{N}}
\newcommand{\tperp}{T^{\perp,v}_pM}
\newcommand{\z}{\mathbb{Z}}
\theoremstyle{plain}
\newtheorem{theorem}{Theorem}
\newtheorem{lemma}{Lemma}
\theoremstyle{definition}
\theoremstyle{remark}
\newtheorem{remark}{Remark}
\newtheorem{number-env}[theorem]{}
\title{Simple closed geodesics in dimensions $\ge 3$} 
\author{Hans-Bert Rademacher}
\address{Mathematisches Institut, 
Universit{\"a}t Leipzig, D--04081 Leipzig, Germany}
\email{rademacher@math.uni-leipzig.de}
\urladdr{www.math.uni-leipzig.de/\symbol{126}rademacher}
\date{2023-08-04, revised 2023-08-09}
\subjclass[2020]{53C22, 58E10}
\keywords{Simple closed geodesic,
	pertubation of metrics, 
	bumpy metric theorem, generic Riemannian metrics,
generic Finsler metrics}
\begin{document}
%%%%%%%%%%%%%%%%%%%%%%%%%%%%%%%%%%%%%%%%%%%%%%%%%%%%
\begin{abstract}
	We show that for a generic Riemannian
	or reversible Finsler metric on
	a compact differentiable manifold $M$ of dimension
	at least three all closed geodesics are simple 
	and do not intersect each other.
	Using results by Contreras~\cite{C2010}
	\cite{C2011} this shows that for a
	generic Riemannian metric
	on a compact and simply-connected manifold
	all closed geodesics are simple and the number
	$N(t)$ of geometrically distinct closed geodesics
	of length $\le t$ grows exponentially. 
\end{abstract}
%%%%%%%%%%%%%%%%%%%%%%%%%%%%%%%%%%%%%%%%%%%%%%%%%%%%
\maketitle
%%%%%%%%%%%%%%%%%%%%
\baselineskip 18pt
%%%%%%%%%%%%%%%%%%%%%%%%%%%%%%%%%%%%%%%%%%%%%%%%%%%%%
\section{Results}
%%%%%%%%%%%%%%%%%%%%%%%%%%%%%%%%%%%%%%%%%%%%%%%%%%%%
%%%%%%%%%%%%%%%%%%%%%%%%%%%%%%%%%%%%%%%%%%%%%%%%%%%%%%%%%%
On a compact differentiable manifold 
%(without boundary) 
$M$ 
endowed with
% a Riemannian metric $g$ or a 
a reversible Finsler metric $f$ 
the corresponding norm 
of a tangent vector $v$ is given by 
%$\|v\|^2 = g(v, v)$ resp.
$\|v\| = f(v).$ 
In the particular case of a Riemannian metric
$g$ we have $\|v\|^2=g(v,v).$
The Finsler metric is called \emph{reversible}
if $f(-v)=f(v)$ for all tangent vectors $v.$
%As common notation we use the letter $g$ also for a Finsler
%metric.
With a closed geodesic
$c: S^1=\R/\z\longrightarrow M$
on a differentiable manifold equipped with
%a Riemannian metric $g$ or 
a Finsler metric $f$
the \emph{iterates} $c^m$ defined by
$c^m(t)=c(mt)$ are closed geodesics, too.
We call a closed geodesic
$c$ \emph{prime,} 
if there is no $m >1$ such that 
$c=c_1^m$ for a closed curve $c_1.$
For a %Riemannian metric resp. 
a reversible Finsler metric
we call closed geodesics 
$c_1,c_2$ \emph{geometrically equivalent,}
if their traces $c_1(S^1)=c_2(S^1)$ coincide.
%In the case of a non-reversible Finsler metric
%one assumes in addition that the orientations
%of the curves coincide.
Otherwise we call the closed geodesics $c_1,c_2$
\emph{geometrically distinct.}
The orthogonal group $\mathbb{O}(2)$ acts
canonically
on the parameter space $S^1$ of loops, hence
also on the free loop space
$\Lambda M$ 
of the manifold $M$
by isometries leaving the 
%square root energy functional
energy functional
$
E: \Lambda M \longrightarrow \R\,;\, 2 E(\sigma)=
%\sqrt{
	%\frac{1}{2}
	\int_0^1 \|\sigma'(t)\|^2\,dt
	%}
$
invariant,
cf. \cite[Sec.2.3]{Kl} for the Riemannian case and
\cite[Sec.2]{Ra2017} for the Finsler case.
The closed geodesics are the critical points of 
the energy functional.
For an arbitrary closed geodesic $c$ there
is a prime closed geodesic $c_1$, such that all
geometrically equivalent closed geodesics
are of the form $\mathbb{O}(2).c_1^m, m\ge 1.$
A prime closed geodesic $c$ 
%of a Riemannian metric or 
of a reversible Finsler metric
can have at most finitely many 
self-intersections, i.e. there are 
parameter values
$s,t\in S^1, s\not=t$
with $c(s)=c(t),$ 
cf. Lemma~\ref{lem:intersection}.
In this case $c'(s), c'(t)$ are linearly 
independent. We say a prime closed geodesic
is \emph{simple}, if it does not have self-intersections,
i.e. if the map $c: S^1\longrightarrow M$ is injective.
%Note, that the corresponding statement does not hold
%in general
%for a non-reversible Finsler metric.
We say that two closed geodesics $c_1,c_2$ 
do not intersect
if their traces are disjoint, i.e. the 
intersection set
$I(c_1,c_2)=c_1(S^1)\cap c_2(S^1)=\emptyset$
is empty.

The main result of this paper is the following
%%%%%%%%%%%%%%%%%%%%%%%%%%%%%%%%%%%%%%%%%%%%%%%%%%%%%%%%%%%
%%%%%%%%%%%%%%%%%%%%%%%%%%%%%%%%%%%%%%%%%%%%%%%%%%%%%%%%%%%
\begin{theorem}
	\label{thm:one}
	Let $M$ be a compact differentiable manifold
	of dimension $n \ge 3.$
	For a $C^r$-generic Riemannian metric
	with $r \ge 2,$
	resp. a $C^r$-generic reversible Finsler metric
	with $r\ge 4,$
	all prime closed geodesics are simple, and 
	geometrically distinct 
	closed geodesics do not intersect each other.
\end{theorem}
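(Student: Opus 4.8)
The plan is to exhibit the set of metrics with the asserted property as a countable intersection of residual subsets of the Baire space $\mathcal{M}^r$ of $C^r$ Riemannian (resp. reversible Finsler) metrics on $M$. Every prime closed geodesic has finite length, and the trace of any closed geodesic coincides with the trace of a prime representative; hence it suffices to produce, for every $L\in\mathbb{N}$, a residual set $\mathcal{G}_L\subset\mathcal{M}^r$ of metrics $g$ for which every prime closed $g$-geodesic of length $\le L$ is simple and any two prime closed $g$-geodesics of length $\le L$ with distinct traces have disjoint traces. Then $\bigcap_{L}\mathcal{G}_L$ is the desired set: for $g$ in it each prime closed geodesic is simple (take $L$ larger than its length), and any two geometrically distinct closed geodesics have disjoint traces (take $L$ larger than the lengths of both prime representatives).

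First I would reduce to bumpy metrics. By the bumpy metric theorem in the Riemannian case and its reversible Finsler counterpart, the set $\mathcal{B}^r\subset\mathcal{M}^r$ of metrics all of whose closed geodesics are nondegenerate is residual; since a residual subset of the dense residual set $\mathcal{B}^r$ is again residual in $\mathcal{M}^r$, it is enough to work inside $\mathcal{B}^r$. Fix $g\in\mathcal{B}^r$ and an $L$ that is a regular value of the length spectrum of $g$. Nondegeneracy yields only finitely many $\mathbb{O}(2)$-orbits of prime closed geodesics $c_1,\dots,c_N$ of length $\le L$, each isolated; by the implicit function theorem applied to the closed-geodesic equation, whose linearisation is invertible at a nondegenerate closed geodesic, these persist as smoothly varying closed geodesics $c_i^{g'}$ and remain the only ones of length $\le L$ for $g'$ in a $C^r$-neighbourhood $\mathcal{U}$ of $g$. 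By Lemma~\ref{lem:intersection} each $c_i$ has only finitely many self-intersections and at each the two velocity vectors are linearly independent, so the ``bad parameter set'' of $g$ consists of finitely many pairs $(s,t)$ (self-intersections of some $c_i$, or points of $c_i\cap c_j$, $i\ne j$).

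The core step is a parametric transversality argument. For each ordered pair $(i,j)$ consider, on $\mathcal{U}$ times a small neighbourhood of the finitely many bad pairs $(s,t)$ in $S^1\times S^1$ (omitting the diagonal when $i=j$), the difference map $F_{ij}(g',s,t)=\exp^{-1}_{c_i^{g'}(s)}\bigl(c_j^{g'}(t)\bigr)\in\R^n$; a self-intersection or an intersection of two closed geodesics of length $\le L$ corresponds exactly to a zero of some $F_{ij}$. The heart of the matter is that each $F_{ij}$ is a submersion along its zero set: a $C^r$-small metric perturbation supported in a small geodesic ball around a point $c_k^{g'}(s_0)$ lying on an embedded arc of $\bigcup_k c_k^{g'}(S^1)$ that contains no (self-)intersection point can be chosen, by the classical metric-bending construction, so that $c_k^{g'}$ follows any prescribed $C^r$-small normal variation near $s_0$; and since $g'$ is bumpy, so that $I-P_{c_k}$ is invertible for the linearised Poincaré map $P_{c_k}$ (equivalently, $c_k$ carries no nontrivial periodic Jacobi field), these variations propagate via the Green's operator of the Jacobi equation to a surjective family of first-order displacements of the point and the tangent line of $c_k^{g'}$ at the relevant parameter, while leaving the other geodesics fixed. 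Hence $0\in\R^n$ is a regular value, the zero set of $F_{ij}$ is a Banach submanifold, and its projection to $\mathcal{U}$ is a Fredholm map of index $2-n<0$; by the Sard--Smale theorem its regular values form a residual subset of $\mathcal{U}$, and over such a value the fibre is a manifold of negative dimension, i.e. empty. Intersecting these residual sets over the finitely many pairs $(i,j)$, and over a countable cover of $\mathcal{B}^r$ by such neighbourhoods $\mathcal{U}$, produces $\mathcal{G}_L$, and $\bigcap_L\mathcal{G}_L$ proves the theorem. In the reversible Finsler case the argument is the same; the hypothesis $r\ge4$ (rather than $r\ge2$) is what guarantees that the Finsler geodesic flow, the implicit-function reduction and the bending construction carry the regularity used above.

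The step I expect to be the main obstacle is precisely this submersion property: verifying that metric perturbations localized \emph{away} from a bad point produce enough first-order freedom in the relative position of the two branches meeting there. This is where both hypotheses enter in an essential way — nondegeneracy (bumpiness) makes the linearised closed-geodesic equation solvable, while $n\ge3$ makes the Fredholm index $2-n$ negative, which is exactly what fails for surfaces — and it is also the point where one must ensure that separating one bad configuration does not create new ones on the remaining, now moving, closed geodesics; this is handled by treating all closed geodesics of length $\le L$ simultaneously in a single Sard--Smale argument rather than removing bad configurations one at a time.
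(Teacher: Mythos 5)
Your architecture agrees with the paper's — bumpy metric theorem to get finitely many nondegenerate closed geodesics below a length bound, $\mathcal{G}^*=\bigcap_L\mathcal{G}_L$ a countable intersection — but you replace the density step with a parametric transversality (Sard--Smale) argument, whereas the paper (Lemma~\ref{lem:one} and Lemma~\ref{lem:perturbation_several}) carries out an explicit local perturbation: the metric is pulled back by a one-parameter diffeomorphism group supported in a spherical shell around each bad point, the Gau{\ss} lemma guarantees that the pushed-aside curve remains a geodesic, and the $n=3$ case is treated by hand since the local surfaces $P_j,P_k$ then meet along a curve rather than only at $p$. Your Fredholm index $2-n<0$ is exactly the transversality-theoretic shadow of the paper's dimension count, so at that level the routes are parallel; the trade-off is that the paper's construction is elementary but case-dependent, while yours promises uniformity at the price of a functional-analytic input.

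That input is where the proposal is incomplete, and it is not merely the routine Sard--Smale bookkeeping you wave at. You need, at \emph{every} zero of $F_{ij}$ over $\mathcal U$, surjectivity of the differential onto $\R^n$. You obtain a two-dimensional subspace for free from $\partial_s,\partial_t$ (here the linear independence of $c_i'(s),c_j'(t)$ from Lemma~\ref{lem:intersection} is used), and the remaining $n-2$ dimensions must come from metric variations. For $i\ne j$ one can localize the perturbation on $c_i$ away from the trace of $c_j$ and move only $c_i$; but for $i=j$ (a self-intersection of a single closed geodesic) any perturbation moves both branches through the common point, and the derivative of $F_{ii}$ with respect to the metric is, up to identifications, the \emph{difference} $J(t)-J(s)$, where $J$ is the unique periodic solution of the inhomogeneous Jacobi equation $J''+RJ=f$ associated with the perturbation. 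You assert that the Green's operator of this equation provides a surjective family of such differences, but this is precisely the nontrivial claim and you give no argument: one must show that as the admissible forcings $f$ (those realizable by $C^r$-small metric perturbations supported in embedded arcs) vary, the values $J(t)-J(s)$ sweep out a complement of $\mathrm{span}\{c'(s),c'(t)\}$, which is not a formal consequence of invertibility of $I-P_c$. You correctly identify this as the main obstacle, but flagging a gap is not filling it. The paper avoids the issue entirely: rather than arguing abstractly that suitable Jacobi fields exist, Lemma~\ref{lem:one} produces, for each branch separately, an explicit family of geodesics $c_s(t)=\xi_{v,w}(t,u_s(t),0,\dots,0)$ obtained by pull-back, whose displacement at the intersection point is prescribed by construction, and Lemma~\ref{lem:perturbation_several} supports the perturbations for different branches in \emph{disjoint} sets $U_j(\eta,\epsilon)$ inside the spherical shell, so the branches can be pushed independently even though they belong to the same closed geodesic. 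That disjointness, plus the explicit $n=3$ discussion of the intersection curve $\gamma_{jk}$, is the concrete content that your submersion claim is silently assuming.

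A secondary point: your remark that one must ``treat all closed geodesics simultaneously in a single Sard--Smale argument'' to avoid creating new bad configurations is overstated. The paper does perturb around the finitely many bad points one at a time; what keeps this safe is that each perturbation is supported in a shell that the closed geodesics only cross transversally along the segments $c_j$, and the nondegeneracy argument (Anosov, \cite[\S 4]{An}, also invoked by the paper) guarantees that no new closed geodesics of length $\le a$ appear under small perturbations, so the perturbed $\overline c_j$ are the \emph{only} short closed geodesics of $\overline g$. You would need a comparable argument to justify restricting the Sard--Smale analysis to a fixed finite list of $F_{ij}$'s over all of $\mathcal U$.
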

%%%%%%%%%%%%%%%%%%%%%%%%%%%%%%%%%%%%%%%%%%%%%%%%%%%%%%%%%
%%%%%%%%%%%%%%%%%%%%%%%%%%%%%%%%%%%%%%%%%%%%%%%%%%%%%%%%%
The proof uses the \emph{bumpy metrics theorem} for 
Riemannian metrics due to Abraham~\cite{Ab}
with a detailed proof given by Anosov~\cite{An}.
The corresponding result for Finsler metrics
has been obtained by Taimanov and the author,
cf. \cite[Thm.3]{RT2020}. 
We state in Theorem~\ref{thm:rev} that the bumpy metrics
theorem also holds for reversible
Finsler metrics.

And we prove the following perturbation results:
Let $c:[0,a]/\{0,a\}=\R/(a \z) \longrightarrow M$ be a 
prime closed geodesic of
a Riemannian metric $g$ with $p=c(0)$ 
parametrized by arc length and hence of length $a.$
In Lemma~\ref{lem:one} we show that for an arbitrary small
$\eta>0$ we can define a one-parameter family of Riemannian
metrics $g^{(s)}, s \in [0,\delta]$ 
for some $\delta >0$
with $g^{(0)}=g$ such that the metrics
$g^{(s)}$ and $g$ only differ in an arbitrary small
tubular neighborhood $U$ of the geodesic segments
$c:[-2\eta,-\eta]\longrightarrow M$
and $c:[\eta,2\eta]\longrightarrow M$
and such that the metric $g^{(s)}$ for
$s>0$ has a closed geodesic $c_s:[0,a]/\{0,a\}\longrightarrow M$
parametrized by arc length which coincides with $c$ 
on the interval $[2\eta,a-2\eta].$ Furthermore the geodesic segment
$c_s|[-\eta,\eta]$ has positive distance from the 
geodesic segment $c|[-\eta,\eta].$
This local perturbation argument can be used 
on manifolds of dimension $n \ge 3$ to 
perturb away self-intersection and intersection points
of distinct closed geodesics, respectively, as we show in
Lemma~\ref{lem:perturbation_several}.
And it is also stated in
Lemma~\ref{lem:perturbation_several} that the analogous result holds for reversible Finsler metrics. 
%%%%%%%%%%%%%%%%%%%%%%%%%%%%%%%%%%%%%%%%%%%%%%%%%%%%%%%%
%%%%%%%%%%%%%%%%%%%%%%%%%%%%%%%%%%%%%%%%%%%%%%%%%%%%%%%%

We can combine the genericity statement 
of Theorem~\ref{thm:one} with other
genericity statements. Let $N(t)$ 
%resp. $N_1(t)$ 
be the number of
geometrically distinct, closed geodesics
%of length $\le t,$ 
%resp. the number of 
%geometrically distinct, simple closed geodesics
of length $\le t.$
The author has shown in~\cite{Ra94} that a 
$C^2$-generic 
Riemannian metric on a compact and simply-connected
manifold is \emph{strongly bumpy} and
carries infinitely many geometrically distinct closed
geodesics. This result has been used by 
Contreras~\cite{C2010}, 
\cite{C2011} to show that
for an open and dense set of
Riemannian metrics on a compact
and simply-connected manifold 
with respect to the $C^2$-topology
the geodesic flow 
contains a 
non-trivial basic hyperbolic set. 
In particular this implies
that $N(t)$ grows exponentially.
Hence we obtain  from Theorem~\ref{thm:one} and
\cite{C2010}, \cite{C2011}:
%%%%%%%%%%%%%%%%%%%%%%%%%%%%%%%%%%%%%%%%%%%%%%%%%%%%
\begin{theorem}
	Let $M$ be a compact and simply-connected manifold
	of dimension $n\ge 3.$
	For a $C^2$-generic Riemannian metric 	
	all prime closed geodesics are simple and do not intersect
	each other. Furthermore the number 
	$N(t)$ of closed geodesics of length $\le t$
	grows exponentially, i.e.
	$$
	\liminf_{t\to \infty} \left(\log N(t)\right)/t>0.$$	
\end{theorem}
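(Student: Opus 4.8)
The plan is to deduce this statement purely formally from Theorem~\ref{thm:one} and the cited work of Contreras, using the Baire property of the relevant space of metrics. Write $\mathcal{M}^2$ for the space of $C^2$ Riemannian metrics on $M$ equipped with the $C^2$-topology. Since $\mathcal{M}^2$ is an open subset of the Banach space of $C^2$ symmetric $2$-tensor fields on the compact manifold $M$, it is a Baire space, so every residual subset is dense and any countable intersection of residual subsets is again residual, in particular non-empty and dense.

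First I would apply Theorem~\ref{thm:one} with $r=2$: it furnishes a residual set $\mathcal{R}_1\subset\mathcal{M}^2$ of metrics for which every prime closed geodesic is simple and any two geometrically distinct closed geodesics have disjoint traces. Next I would invoke Contreras~\cite{C2010},~\cite{C2011}: for a compact simply-connected $M$ there is a set $\mathcal{R}_2\subset\mathcal{M}^2$, open and dense in the $C^2$-topology, such that for $g\in\mathcal{R}_2$ the geodesic flow of $g$ admits a non-trivial basic hyperbolic set. Such a set has positive topological entropy, so the number of prime periodic orbits of the geodesic flow of length $\le t$ grows at least like $\exp(ht)$ for some $h>0$; since a prime periodic orbit of the geodesic flow on the unit tangent bundle projects to a prime closed geodesic, and at most boundedly many such orbits (the two orientations) project to one geometric closed geodesic, this gives $\liminf_{t\to\infty}(\log N(t))/t>0$. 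Being open and dense, $\mathcal{R}_2$ is residual in $\mathcal{M}^2$.

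Finally I would set $\mathcal{R}=\mathcal{R}_1\cap\mathcal{R}_2$. By the Baire property this is a residual, hence dense, subset of $\mathcal{M}^2$, so a $C^2$-generic Riemannian metric lies in $\mathcal{R}$. Every such metric has all prime closed geodesics simple and pairwise non-intersecting geometrically distinct closed geodesics because it lies in $\mathcal{R}_1$, and has $N(t)$ growing exponentially because it lies in $\mathcal{R}_2$; this proves the theorem.

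The argument is essentially soft, so I do not expect a genuine obstacle. The only points requiring a little care are to check that both genericity statements are formulated with respect to the same topology on the same space of metrics (the $C^2$-topology) so that they may legitimately be intersected, and to record the standard passage from exponential growth of the number of periodic orbits inside a basic hyperbolic set to exponential growth of the geometric count $N(t)$.
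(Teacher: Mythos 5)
Your argument is exactly the one the paper has in mind: the paper does not even write out a separate proof of this theorem, but simply asserts that it follows by combining Theorem~\ref{thm:one} (residual set of metrics with simple, non-intersecting closed geodesics) with Contreras's open-dense set of metrics whose geodesic flow has a non-trivial basic hyperbolic set, and your explicit intersection-of-residual-sets reasoning is the standard way to make that combination precise. The only caveat worth recording is that the relevant space of metrics (smooth metrics with the strong $C^2$-topology, as in \cite{An} and \cite{Ra94}) must indeed be checked to be a Baire space so that the intersection of a residual set and an open dense set is still residual; this is standard and implicitly assumed in the paper.
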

%%%%%%%%%%%%%%%%%%%%%%%%%%%%%%%%%%%%%%%%%%%%%%%%%%%%%%%
For surfaces results are quite different,
cf. Remark~\ref{rem:surfaces}. For example a generic
Riemannian or reversible Finsler metric of positive
curvature on a two-dimensional sphere $S^2$
has only finitely many simple closed geodesics
but infinitely many geometrically distinct closed
geodesics with self-intersections.
Surveys for existence results for closed geodesics
are \cite{Ba}, \cite{Oa} and \cite{T2010}.\\[2ex]
%%%%%%%%%%%%%%%%%%%%%%%%%%%%%%%%%%%%%%%%%%%%%%%%%%%%%%%
{\sc Acknowledgement.} 
%%%%%%%%%%%%%%%%%%%%%%%%%%%%%%%%%%%%%%%%%%%%
I am grateful to the referee for 
the helpful comments and suggestions.
%%%%%%%%%%%%%%%%%%%%%%%%%%%%%%%%%%%%%%%%%%%%%%%%
%%%%%%%%%%%%%%%%%%%%%%%%%%%%%%%%%%%%%%%%%%%%%%%%
%%%%%%%%%%%%%%%%%%%%%%%%%%%%%%%%%%%%%%%%%%%%%%%
\section{Perturbing a single geodesic segment}
%%%%%%%%%%%%%%%%%%%%%%%%%%%%%%%%%%%%%%%%%%%%%%%%%%%
\label{sec:perturb}
%%%%%%%%%%%%%%%%%%%%%%%%%%%%%%%%%%%%%%%%%%%%%%%%%%%%%%%
For the proof of our perturbation result 
we introduce at first geodesic coordinates in a tubular neigbhorhood of 
a geodesic segment. 
We first discuss in detail the case of a Riemannian
metric $g.$ Later we explain which changes
are necessary in case of a reversible Finsler metric $f.$

On a compact Riemannian manifold $(M,g)$
there is a positive
number $\widetilde{\eta}<\inj/3,$ where
$\inj$ is the 
\emph{injectivity radius,} such that 
the following holds for any
point $p \in M,$ any 
unit tangent vector $v \in T_p^1M,$ 
any $0<\eta\le \widetilde{\eta}$
and any 
sufficiently small

$\epsilon >0$ with $0 <7 \epsilon < \eta:$
Let $\tperp :=
\{x \in T_pM\,;\, \langle v, x\rangle=0\}$
be the orthogonal complement of the 
one-dimensional subspace generated by
$v.$
And for a linear subspace $W \subset T_pM$
define $B_{\epsilon}(W):=\{x\in W\,;\, \|x\|<\epsilon,\},$
hence
$B_{\epsilon}(\tperp) =
\{x \in T_pM; \|x\|<\epsilon, \langle x,v\rangle=0\}.$
Then  the restriction
%%%%%%%%%%%%%%%%%%%%%%%%%%%%%%%%%%%%%%%%%%%%%%%%%%%%%%%%
\begin{equation}
	\label{eq:normal_exp}
	\exp^{\perp,v}: 
	(t,x)\in 
	[-2\eta,2\eta]
	\times
	B_{\epsilon}(T_p^{\perp,v}M) 
	\longrightarrow 
	\exp(t \nu (\exp_p(x))\in M
\end{equation} 
%%%%%%%%%%%%%%%%%%%%%%%%%%%%%%%%%%%%%%%%%%%%%%%%%%%%%%%%%
of the
normal exponential map 
is a diffeomorphism onto 
the tubular neighborhood 
$Tb_{v}(\eta,\epsilon)
=\exp^{\perp,v}
\left(
[-2\eta,2\eta]
\times
B_{\epsilon}\left(\tperp\right)
\right)
$ of the geodesic
$c_v:[-2\eta,2\eta]\longrightarrow M$ with
$c_v(0)=p, c_v'(0)=v.$
Here
$\nu: \Sigma_{p,v}(\epsilon)\longrightarrow
T^1M$
is the unit normal vector field 
defined on the 
local hypersurface $\Sigma_{p,v}(\epsilon)=
\exp_p\left(B_{\epsilon}
\left(T_p^{\perp,v}M\right)\right)$
with $\nu(p)=v.$ 

For $q \in M, r>0$ we denote
by $B_q(r)$ the geodesic ball around
$q$ of radius $r,$ i.e.
$B_q(r)=\{x \in M\,;\, d(q,x)\le r\}.$
Here $d$ is the distance induced by the
Riemannian metric $g.$
For statements about Riemannian geometry we
refer to~\cite[ch.1]{Kl}.
We define the \emph{spherical shell}
$A_p(\eta,\epsilon)=
B_p(\eta+7\epsilon)-B_p(\eta)$
around $p.$ And for $v\in T_p^1M$ we have the
tubular neighborhood
$Tb_{v}(\eta,\epsilon)$
and the sets 
$U^{-}_{v}(\eta,\epsilon)=
\exp_p^{\perp,v}
\left((-\eta-6\epsilon,-\eta)\times
B_{\epsilon}(\tperp)
\right)$
and
$U^{+}_{v}(\eta,\epsilon)=
\exp_p^{\perp,v}
\left((\eta,\eta+6\epsilon)\times
B_{\epsilon}(\tperp)
\right).$
Hence the union
$U_v(\eta,\epsilon)=
U_v^{-}(\eta,\epsilon)
\cap
U_v^+(\eta,\epsilon)$
%Tb_v(\eta,\epsilon)\cap A_p(\eta, \epsilon)$ 
is the disjoint union
of the two connected sets $U^{\pm}_v(\eta,\epsilon).$
And $U_v(\eta,\epsilon)
\subset
A_p(\eta,\epsilon).$
If now $w\in T_p^1M$ is a unit vector
orthogonal to $v$ we can choose a local
isometry 
\begin{equation}
	\label{eq:zeta}
	\zeta=\zeta_w:
	(B_{\epsilon}(\tperp),g_p)
	\longrightarrow
	D^{n-1}(\epsilon)
\end{equation}
between $B_{\epsilon}(\tperp)$
with the Riemannian metric $g_p,$
and an Euclidean disc $D^{n-1}(\epsilon)$ of radius
$\epsilon$ and dimension $(n-1)$
with $\zeta_{w}(w)=e_2.$
Here $(e_1,e_2,\ldots,e_n)$ is an orthonormal basis
for $\R^n$ 
with coordinates $x=\sum_{i=1}^nx_ie_i$
and $D^{n-1}(\epsilon)=\{(0,x_2,\ldots,x_n)
\in \R^n\,;\,\sum_{j=2}^nx_j^2<\epsilon\}.$
We obtain the following
diffeomorphism:
%%%%%%%%%%%%%%%%%%%%%%%%%%%%%%%%%%%%%%%%%%%%%%%%%%%%%%%%%%%%
\begin{equation}
	\label{eq:xi}
	\xi=\xi_{v,w}: 
	[-2\eta,2\eta]
	\times 
	D^{n-1}(\epsilon)
	\longrightarrow Tb_v(\eta,\epsilon)\,;\,
	(t,y)	\longmapsto \exp^{\perp,v}(t \nu(\exp_p(\zeta_{w}^{-1}(y)))	\,.
\end{equation}
%%%%%%%%%%%%%%%%%%%%%%%%%%%%%%%%%%%%%%%%%%%%%%%%%%%%%%%%%%%%%%%
By definition 
for sufficiently small $\epsilon>0$
the curves with $y=const$ are geodesics
parametrized by arc length.
The subset
%%%%%%%%%%%%%%%%%%%%%%%%%%%%%%%%%%%%%%%%%%%%%%%%%%%%%%%%%%%%%%
\begin{equation}
	\label{eq:pvw}
	P(v,w)=\{\exp^{\perp,v}(t\nu(\exp_p(sw)))\,;\,
	t\in [-2\eta,2\eta],-\epsilon<s<\epsilon\}		
\end{equation}
%%%%%%%%%%%%%%%%%%%%%%%%%%%%%%%%%%%%%%%%%%%%%%%%%%%%%%%%%%%%%%%
is a local surface defined in the tubular neighborhood 
$Tb_v(\eta,\epsilon)$ of the geodesic $c.$
%%%%%%%%%%%%%%%%%%%%%%%%%%%%%%%%%%%%%%%%%%%%%%%%%%%%%%%%%%%%%%
%\end{remark}
%%%%%%%%%%%%%%%%%%%%%%%%%%%%%%%%%%%%%%%%%%%%%%%%%%%%%%%%%%%%%%%%%%
\begin{remark}[Finsler case, orthogonal complement, exponential map]
	For facts about Finsler metrics we refer to
	the books~\cite{BCS} and \cite{Sh}. If $f:TM \longrightarrow \R$
	is a Finsler metric
	one obtains in each point $p\in M$ a
	whole family $g_v$ of Riemannian metrics 
	parametrized by unit tangent vectors
	$v \in T_p^1M.$ This Riemannian metric is defined
	for $x,y \in T_pM$ 
	as follows:
	\begin{equation}
		g_v(x,y)=\langle x,y \rangle_v=\frac{1}{2}\left. 
		\frac{\partial^2}{\partial s \partial t}\right|_{s=t=0}f^2(v+sx+ty)\,.		
	\end{equation}
	Then the \emph{Legendre transformation}
	\begin{equation}
		\mathcal{L}:TM \longrightarrow 
		T^*M\,;\, v \longmapsto \langle
		.,v\rangle_v
	\end{equation}
	is defined. The orthogonal complement
	of a unit vector $v \in T^1_pM$ can be defined
	as the kernel of the linear form
	$\mathcal{L}(v),$
	i.e. the orthogonal complement of $v$ with respect to
	the Riemannian metric $g_v.$
	The exponential mapping 
	$\exp_p:B_{\epsilon}(T_pM)\longrightarrow B_p(\epsilon)\subset M $
	restricted to tangent vectors of length $<\epsilon,$
	where $\epsilon$ is smaller than the injectivity radius,
	is a $C^1$-diffeomorphism, outside $0$ the map is
	$C^{\infty}.$
\end{remark}
%%%%%%%%%%%%%%%%%%%%%%%%%%%%%%%%%%%%%%
\begin{lemma}
	\label{lem:one}
	Let $(M,g)$ be a compact Riemannian manifold, $p \in M$
	and $v,w \in T_p^1M$ two unit vectors orthogonal to each other,
	i.e. $\langle v,w\rangle=0.$ Let $\eta>0$ be 
	sufficiently small, i.e.
	$\eta \le \widetilde{\eta},$
	cf. the beginning of Section~\ref{sec:perturb}.
	Let $\epsilon >0$ satisfy 
	$0<6\epsilon <\eta$ and such that the map
	$\xi=\xi_{v,w}$ from Equation~\eqref{eq:xi} is a 
	diffeomorphism and $c(t)=\xi_{v,w}(t,0)
	=\exp(tv), -2\eta\le t \le 2\eta$ 
	is a geodesic parametrized by arc length.
	
	Then there is a smooth one-parameter family of Riemannian metrics
	$g^{(s)}, s\in [0,\delta]$ for some
	sufficiently small $\delta>0$
	with $g=g^{(0)}$ such that 
	the metric $g^{(s)}$ has a geodesic
	$c_s(t)=\xi(t,u_{s}(t),0,\ldots,0)$ 
	with 
	a scalar function 
	$u_s:[-2\eta,2\eta]\longrightarrow \R^{\ge 0}$
	satisfying
	$u_{s}(t)=0$ for all 
	$t \in (-2\eta,-\eta-4\epsilon)
	\cup(\eta+4\epsilon,2\eta)$
	and $u_{s}(t)=s$ for all $t \in [-\eta-2\epsilon,\eta+2\epsilon]$ 
	and sufficiently small $s>0,$
	and
	$u_{s}'(t)t\le 0,$ i.e. $u_{s}$ is monotone increasing for 
	$t\le -\eta$ and monotone decreasing for $t\ge\eta.$
	In particular, for $s>0$ the geodesic $c_s$ lies in
	the local surface $P(v,w)-\{p\}$ 
	and has the same length $2\eta$ as $c.$
	The metrics $g^{(s)}$ and $g$ coincide outside
	the set $U_v(\eta,\epsilon)=
	U_v^-(\eta,\epsilon)\cup
	U_v^+(\eta,\epsilon).$
\end{lemma}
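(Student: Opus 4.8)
\emph{Plan of proof.} The plan is to obtain $g^{(s)}$ by an explicit surgery on $g$, performed in the adapted coordinates $\xi=\xi_{v,w}$ of \eqref{eq:xi} and altering $g$ only near two short ``bending'' arcs of $c_s$. First fix, once and for all, a smooth function $\phi:[-2\eta,2\eta]\to[0,1]$ with $\phi\equiv 0$ on $(-2\eta,-\eta-4\epsilon)\cup(\eta+4\epsilon,2\eta)$, $\phi\equiv 1$ on $[-\eta-2\epsilon,\eta+2\epsilon]$, $\phi$ weakly increasing on $[-2\eta,-\eta]$ and weakly decreasing on $[\eta,2\eta]$, and put $u_s:=s\phi$; then $u_s\ge 0$, $u_s'(t)\,t\le 0$, $u_s$ takes exactly the prescribed values, and $c_s(t):=\xi(t,u_s(t),0,\dots,0)$ has the parameter interval $[-2\eta,2\eta]$, lies in the surface $P(v,w)$ of \eqref{eq:pvw} (since $u_s(t)\in[0,s]\subset(-\epsilon,\epsilon)$ and $\zeta_w(s'w)=(s',0,\dots,0)$), and misses $p=\xi(0,0)$ because $u_s(0)=s>0$. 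Since $\xi$ is built from the normal exponential map of the hypersurface $\Sigma_{p,v}(\epsilon)$, the Gauss lemma gives $g=dt^2+\sum_{i,j\ge 2}h_{ij}(t,y)\,dy^idy^j$ with no $dt\,dy^i$-terms; hence every coordinate line $\{y=\mathrm{const}\}$ is a unit-speed $g$-geodesic, so $c_s$ is already a unit-speed $g$-geodesic wherever $u_s$ is locally constant, i.e.\ off the two intervals $(-\eta-4\epsilon,-\eta-2\epsilon)$ and $(\eta+2\epsilon,\eta+4\epsilon)$, and the corresponding two arcs of $c_s$ are compactly contained in $U_v^-$, resp.\ $U_v^+$.

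It therefore suffices to modify $g$ near these two arcs. On a thin tubular neighbourhood of them introduce coordinates $(t,z)=(t,z_2,\dots,z_n)$ by $z_2=y_2-u_s(t)$, $z_i=y_i$ $(i\ge 3)$, so that $c_s=\{z=0\}$ parametrized by $t$; rewriting $g$ in these coordinates one gets $\hat g_{tt}=1+u_s'(t)^2 h_{22}$, $\hat g_{ti}=u_s'(t)\,h_{2i}$, $\hat g_{ij}=h_{ij}$, all $h$'s being evaluated at $y=(z_2+u_s(t),z_3,\dots,z_n)$. Choose a smooth $\chi:D^{n-1}(\epsilon)\to[0,1]$ with $\chi\equiv 1$ near $0$ and $\chi\equiv 0$ near $\partial D^{n-1}(\epsilon)$, and define $g^{(s)}$ to equal $g$ off this tube and, on the tube,
\[
g^{(s)}_{tt}:=\hat g_{tt}+\chi(z)\bigl(1-\hat g_{tt}\bigr),\qquad g^{(s)}_{ti}:=\hat g_{ti}-\chi(z)\,\hat g_{ti}(t,0),\qquad g^{(s)}_{ij}:=\hat g_{ij}.
\]
Both correction terms are supported in $\{\chi(z)\ne 0\}\cap\{u_s'(t)\ne 0\}$, which for $\delta$ small is compactly contained in $U_v(\eta,\epsilon)=U_v^-\cup U_v^+$ (using $|u_s(t)|\le s$ to keep $y$ inside $D^{n-1}(\epsilon)$); hence $g^{(s)}$ is a well-defined Riemannian metric of the same regularity as $g$, coinciding with $g$ outside $U_v(\eta,\epsilon)$, depending polynomially (hence smoothly) on $s$, with $g^{(0)}=g$ because the corrections vanish for $s=0$, and positive definite for $s\in[0,\delta]$ with $\delta$ small, since then $g^{(s)}$ differs from the block form $\mathrm{diag}(\ge 1,\ \text{positive definite})$ only by terms of order $s$.

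It remains to see that $c_s$ is a unit-speed $g^{(s)}$-geodesic. In the coordinates $(t,z)$ we have $g^{(s)}_{tt}\equiv 1$ in a full neighbourhood of $\{z=0\}$ (where $\chi\equiv 1$) and $g^{(s)}_{ti}\equiv 0$ along $\{z=0\}$; reading off the geodesic equation for the coordinate line $\{z=0\}$, this means $\Gamma^k_{tt}\equiv 0$ along $\{z=0\}$ while $g^{(s)}_{tt}|_{z=0}=1$, i.e.\ $c_s=\{z=0\}$, parametrized by $t$, is a unit-speed $g^{(s)}$-geodesic on the tube. Combined with the first paragraph (outside the tube $g^{(s)}=g$ and $c_s$ is already a unit-speed $g$-geodesic there), it follows that $c_s:[-2\eta,2\eta]\to M$ is a unit-speed $g^{(s)}$-geodesic, of the same length as $c$, lying in $P(v,w)\setminus\{p\}$, which is the assertion.

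The one genuinely delicate point — and the reason a plain pull-back of $g$ by a diffeomorphism supported in $U_v(\eta,\epsilon)$ will not suffice — is that inside $U_v^-$ the arc $c_s$ interpolates between the two \emph{distinct} parallel $g$-geodesics $\{y=0\}$ (near $t=-\eta-6\epsilon$) and $\{y_2=s\}$ (near $t=-\eta$), so no diffeomorphism equal to the identity near $\partial U_v^-$ can carry a single $g$-geodesic onto it; the metric coefficients themselves must be changed near the bends, which is precisely what the cut-off $\chi$ does while leaving $g$ untouched wherever $c_s$ is already a $g$-geodesic, in particular near $\partial U_v^\pm$. Everything else — the elementary properties of $\phi$, $u_s$, $c_s$, the positivity of $g^{(s)}$ for small $s$, the smooth dependence on $s$, and (for the reversible Finsler analogue taken up later) the bookkeeping of the lower regularity of the exponential map — is routine.
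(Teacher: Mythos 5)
Your construction is correct, and it gives a genuinely different proof from the paper's. Rademacher obtains $g^{(s)}$ as the convex combination
\[
g^{(s)}=(1-\alpha)\,g+\alpha\,(\Psi^{-s})^{*}g,
\]
where $\Psi^{s}$ is a one-parameter group of diffeomorphisms supported in the tube, with $\Psi^{s}(c(t))=\xi(t,u_s(t),0,\dots,0)=c_s(t)$, and $\alpha$ is a cut-off supported in $U_v(\eta,\epsilon)$. He then verifies that $c_s$ is a $g^{(s)}$-geodesic by splitting $[-2\eta,2\eta]$ into pieces: on the bending arcs ($\eta+\epsilon\le|t|\le\eta+5\epsilon$) the cut-off satisfies $\alpha\equiv 1$ near $c_s$, so locally $g^{(s)}=(\Psi^{-s})^{*}g$ and $c_s=\Psi^{s}\circ c$ is automatically a geodesic of the pull-back; on the transition intervals ($\eta\le|t|\le\eta+\epsilon$ and $\eta+5\epsilon\le|t|\le\eta+6\epsilon$) the curve $c_s$ is a straight $t$-line and the Gauss Lemma shows $t$-lines are geodesics of any convex combination of metrics sharing these geodesic parallel coordinates (cf.\ Remark~\ref{rem:gauss}); elsewhere $g^{(s)}=g$ and $c_s=c$. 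You instead work in the sheared coordinates $(t,z)$ adapted to $c_s$, compute the components $\hat g_{tt},\hat g_{ti}$ explicitly, and subtract off exactly the terms that obstruct $\{z=0\}$ from being a unit-speed geodesic, cutting off with $\chi$; the geodesic property then follows from $g^{(s)}_{tt}\equiv 1$ and $g^{(s)}_{ti}|_{z=0}\equiv 0$, which force $\Gamma^k_{tt}|_{z=0}=0$. Your approach is more computational but correspondingly more transparent about \emph{why} it works; the paper's approach trades the Christoffel computation for the diffeomorphism-pull-back formalism and the Gauss-Lemma observation about convex combinations in geodesic parallel coordinates.

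Your aside is a useful diagnosis: a pure pull-back $(\Phi^{-s})^*g$ by a diffeomorphism equal to the identity near $\partial U_v^{\pm}$ indeed cannot make $c_s$ a geodesic, since near the two boundary components of $U_v^-$ the curve $c_s$ coincides with two distinct $t$-lines of $g$. The paper's construction evades this exactly by blending with $g$ via $\alpha$ rather than using a bare pull-back, which is a third way of ``changing the coefficients near the bends''. Two cosmetic points: the dependence of $g^{(s)}$ on $s$ enters through $h_{ij}(t,z_2+s\phi(t),z_3,\dots)$, so it is smooth but not polynomial in $s$; and when checking positive definiteness it is worth noting $g^{(s)}_{tt}=1+(1-\chi)h_{22}u_s'^2\ge 1$ and $g^{(s)}_{ij}=h_{ij}$, so only the $O(s)$ off-diagonal block $g^{(s)}_{ti}$ needs the smallness of $s$ (on a fixed compact set), which is exactly what you invoke.
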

%%%%%%%%%%%%%%%%%%%%%%%%%%%%%%%%%%%%%%%%%%%%%%%%%%%%%%%%%%%%%%%%%%
%%%%%%%%%%%%%%%%%%%%%%%%%%%%%%%%%%%%%%%%%%%%%%%%%%%%%%%%%%%%%%%%%%
\begin{proof}
	%%%%%%%%%%%%%%%%%%%%%%%%%%%%%%%%%%%%%%%%%%%%%%%%%%
	Define 
	for sufficiently small $\delta>0$
	the following vector field 
	$V(t,x_2,\ldots,x_n)$ for
	$(t,x) \in 
	[-2\eta,2\eta]\times 
	D^{n-1}(\epsilon)$
	with $x=(x_2,\ldots,x_n):$
	
	Let $V(t,x_2,\ldots,x_n)=(0, \psi(t,x),0,\ldots,0)$ where
	$(t,x)\in [-2\eta,2\eta]\times D^{n-1}\longmapsto
	\psi(t,x)\in [0,1]$ is a smooth function, 
	and
	$\psi(t,x_2,0, \ldots,0)=1$ for 
	$|t|\le \eta + 2\epsilon, 0\le x_2\le \delta$
	and $\psi(t,x)=0$ for $|t|\ge \eta+4\epsilon$ or
	$\|x\|\ge 2\delta.$
	In addition we assume $\psi(-t,x)=\psi(t,x)$
	for all $t,x.$ 
	This vector field determines 
	an one-parameter group of diffeomorphisms
	$$\widetilde{\Psi}^s: 
	[-2\eta,2\eta]\times
	D^{n-1}(\epsilon) 
	\longrightarrow
	[-2\eta,2\eta]
	\times 
	D^{n-1}(\epsilon) $$
	with $s \in \R;\widetilde{\Psi}^s(t,x)=(t,x)$ for 
	$|t|\ge \eta+4\epsilon$ or $\|x\|\ge \delta$
	and $\widetilde{\Psi}^s(t,0)=(t,s,0,\ldots,0)$ 
	for $s \in [0,\delta]$
	and $|t|\le \eta+2\epsilon.$ In addition
	$\widetilde{\Psi}^s(t,0)=(t,u_{s}(t),0,\ldots,0)$
	with a smooth and even function 
	$t\longmapsto u_{s}(t)$
	which is monotone increasing for
	$t<0$ and hence monotone decreasing for $t>0,$
	and satisfies $u_{s}(t)=s$ for $0< s\le \delta$ for all
	$t$ with $|t|\le \eta+2\epsilon$
	and $u_s(t)=0$ for all $t$ with $|t|\ge \eta+4\epsilon.$
	With the help of the diffeomorphism $\xi_{u,v}$ defined in
	Equation~\eqref{eq:xi} we obtain
	a one-parameter group of 
	diffeomorphisms
	$\Psi^s: M \longrightarrow M$ with
	$\Psi^s (y)=y$ for all 
	$y \in M-Tb_{v}(\eta,\epsilon)$
	and $s\ge 0,$
	and
	\begin{equation}
		\label{eq:diffeo-s}
		\Psi^s(y)=
		\xi_{v,w}\left(\widetilde{\Psi}^s
		\left(\xi_{v,w}^{-1}
		\left(y\right)\right)\right)
	\end{equation}
	for all $y \in Tb_v(\eta,\epsilon), s\in \R.$
	Hence for the geodesic $c(t)=\xi_{v,w}(t,0),
	t\in [-2\eta,2\eta]$ we obtain
	%%%%%%%%%%%%%%%%%%%%%%%%%%%%%%%%%%%%%%%%%%%%%%%%%%%%%%
	\begin{equation}
		\label{eq:csc}
		c_s(t)=\Psi^s(c(t))=\xi_{v,w}(t,u_{s}(t),0).
	\end{equation}
	%%%%%%%%%%%%%%%%%%%%%%%%%%%%%%%%%%%%%%%%%%%%%%%%%%%%%%
	And the curves $c_s=c_s(t),c=c(t)$ coincide on 
	$[-2\eta,-\eta-4\epsilon]
	\cup
	[\eta+4\epsilon,2\eta].$
	
	For a positive number $\eps>0$ 
	choose smooth cut-off functions
	$\beta_{\eps}, \beta_{\eta,\epsilon}:\R \longrightarrow [0,1]$
	with $\beta_{\eps}(-t)=\beta_{\eps}(t)$ and $
	\beta_{\eta,\epsilon}(t)=
	\beta_{\eta,\epsilon}(-t)$ for all $t \ge 0.$
	The function $\beta_{\eps}:[0,\infty]\longrightarrow \R$ 
	is monotone decreasing and satisfies 
	$\beta_{\eps}(t)=1$ for all $t\in [0,\eps],$ 
	and
	$\beta_{\eps}(t)=0$ for all $t\ge 2\eps.$
	The function
	$\beta_{\eta,\epsilon}:[0,\infty]\longrightarrow \R$ 
	satisfies:
	$\beta_{\eta,\epsilon}(t)=1$
	for all $t$ with $|t|\in [\eta+\epsilon,
	\eta+5\epsilon],$ and
	$\beta_{\eta,\epsilon}(t)=0$ for
	all $t$ with $|t|\le \eta$ or $|t|\ge \eta+
	6\epsilon.$
	And $\beta_{\eta,\epsilon}$ is monotone
	increasing on $[\eta,\eta+\epsilon]$ and monotone
	decreasing on $[\eta+5\epsilon,\eta+6\epsilon].$
	%%%%%%%%%%%%%%%%%%%%%%%%%%%%%%%%%%%%%%%%%%%%%%%%%%%%%%%%%%%%%%%
	Then the smooth function
	$\alpha=\alpha_{\eta,\epsilon,\delta}:
	M \longrightarrow [0,1]$
	is defined as follows:
	$\alpha(\xi_{v,w}(t,x))=\beta_{\delta}(\|x\|)
	\beta_{\eta,\epsilon}(t)$ and 
	$\alpha(x)=0$ for $x \not\in Tb_v(\eta,\epsilon).$
	By definition of the function 
	$\alpha=\alpha_{\eta,\epsilon,\delta}$ it follows that
	$\alpha$ vanishes outside the set
	$U_v(\eta,\epsilon),$
	i.e. $\alpha(y)=0$ for $y \not\in U_v(\eta,\epsilon).$
	With this function we define the 
	smooth pertubation
	$g^{(s)}, s\in [0,s_0]$ of the metric 
	$g=g^{(0)}$
	by the following convex combination of
	metrics:
	\begin{equation}
		\label{eq:gs}
		g^{(s)}=(1-\alpha) g +\alpha (\Psi^{-s})^*(g)\,.
	\end{equation} 
	Here $g_1^{(s)}=(\Psi^{-s})^*(g)$ is the pull-back
	of the Riemannian metric $g$ via the diffeomorphism
	$\Psi^{-s}.$ By definition the mapping
	$\Psi^s: (M,g) 
	\longrightarrow (M,g^{(s)}_1)$ is an isometry
	between these Riemannian manifolds.
	We conclude that the geodesic
	$c: \R \longrightarrow M$ with $c(0)=p, c'(0)=v$
	with respect to the metric $g$
	is mapped onto the geodesic
	$c_s:\R \longrightarrow M$ defined above
	of the metric $g_1^{(s)}.$
	The restriction 
	$c_s:[-\eta-2\epsilon,\eta+2\epsilon]
	\longrightarrow M$
	is a geodesic 
	parametrized by arc length
	of the metrics $g$ and of the
	metric $g_1^{(s)}=(\Psi^{-s})^*(g).$ 
	%This holds by Equation~\eqref{eq:csc}
	%since
	%$u_s(t)=s$ for $t\in [\eta,\eta+2\epsilon].$
	Hence 
	$c_s:[-\eta-2\epsilon,\eta+2\epsilon]
	\longrightarrow M$
	is also a geodesic of the  convex
	combination $g^{(s)}$
	defined in Equation~\eqref{eq:gs}.
	This one can check easily since the energy
	$E_s(\gamma)$ with respect to $g^{(s)}$
	of an arbitrary smooth curve 
	$\gamma(t)=\xi_{v,w}(t,x(t)), t\in
	[t_0,t_1]\subset [-\eta-2\epsilon,\eta+2\epsilon]$
	with $x(t_0)=x(t_1)$
	satisfies
	$E_s(\gamma)\ge (t_1-t_0)/2$
	and the restriction
	$c_s|[t_0,t_1]$ satisfies
	$E_s(c_s|[t_0,t_1])=(t_1-t_0)/2.$
	This is a consequence of the
	\emph{Gau{\ss} Lemma,} cf.
	\cite[sec.1.9]{Kl} and the 
	following Remark~\ref{rem:gauss},
	since the curves 
	$t\in[-\eta-2\epsilon,\eta+2\epsilon]\mapsto \xi_{v,w}(t,x)
	\in M$ for fixed $x\in D^{n-1}$ are geodesics
	parametrized by arc length of the metric
	$g$ and of the metric $g_1^{(s)}.$
	Therefore the curve 
	$c_s:[-\eta-2\epsilon,\eta+2\epsilon]
	\longrightarrow M$ is locally the shortest
	connection of its endpoints with respect to
	the metric $g^{(s)},$ hence a geodesic.
	On the other hand the curve
	$c_s:[-2\eta,2\eta]\longrightarrow M$
	is a geodesic of $g_1^{(s)},$ since
	$\alpha(c_s(t))=1$ for $|t|
	\in[\eta+\epsilon,\eta+5\epsilon]$
	it also follows that 
	$c_s:[\eta+\epsilon,\eta+5\epsilon]
	\longrightarrow M$ as well
	as 
	$c_s:[-\eta-5\epsilon,-\eta-\epsilon]
	\longrightarrow
	M$ is also a geodesic of $g^{(s)}$ since
	it is a geodesic of $g^{(s)}_1.$
	In the line following 
	Equation~\eqref{eq:csc}
	we already have seen that the curves
	$c$ and $c_s$ coincide on
	$[\eta+4\epsilon,2\eta]$
	and $[-2\eta,-\eta-4\epsilon].$
	And since $\Psi(\xi_{v,w}(t,x))=\xi_{v,w}(t,x)$
	for $|t|\in[\eta+4\epsilon,2\eta]$ the
	metrics $g,g_1^{(s)}$ and $g^{(s)}$
	coincide on the set
	$\xi_{v,w}(([-2\eta,-\eta-4\epsilon]\cup[\eta+4\epsilon,2\eta])\times D^{n-1}).$
	Hence $c_s:[-2\eta,2\eta]\longrightarrow M$
	is a geodesic segment of the Riemannian metric
	$g^{(s)}.$
	
	We have shown that the
	smooth one-parameter family $g^{(s)}$
	of Riemannian metrics for sufficiently
	small $s$ carries a geodesic
	$c_s(t)$ with the following properties: 
	$c_s$ coincides outside $Tb_v(\eta,\epsilon)$
	with the geodesic $c$ of the metric $g$
	and $c_s(t)=\xi_{v,w}(t,u_{s}(t),0,\ldots,0)$ for
	$t\in [-2\eta,2\eta]$ and $u_{s}(t)=s$
	for $t \in [-\eta-2\epsilon,\eta+2\epsilon].$
	And $u_s(t)=0$ for $t \in [-2\eta,-\eta-4\epsilon]
	\cup [\eta+4\epsilon, 2\eta].$
	In particular
	$c_s([-2\eta,2\eta])\subset P(u,v)-\{p\},$
	i.e. $c_s$ lies in the local surface
	$P(u,v)$ and avoids the point $p$
	for $s>0.$
	
	The arguments 
	%for the Finsler case in the last Remark
	can be carried over to the case of a
	reversible Finsler metric. 
	In particular the 
	Finsler metric $f^{(s)}$
	corresponds to the convex combination
	of Riemannian metrics given
	in Equation~\eqref{eq:gs}:
	\begin{equation*}
		f^{(s)}=\sqrt{(1-\alpha) 
			f^2 +\alpha \left(\left(\Psi^{-s}\right)^*(f)\right)^2}	\,.
	\end{equation*}	
	This is a Finsler metric, for which 
	$	c_s(t)=\xi_{v,w}(t,0)=
	\xi_{v,w}(t,u_s(t),0,\ldots,0)$
	is a geodesic as in the Riemannian case.
	For the Gau{\ss} Lemma in 
	Finsler geometry cf.~\cite[Sec.6.1]{BCS}.
\end{proof}
%%%%%%%%%%%%%%%%%%%%%%%%%%%%%%%%%%%%%%%%%%%%%%%%%%%%%
\begin{remark}[Convex combination of Riemannian and Finsler metrics]
	\label{rem:gauss}
	(a) In the proof we used the following local statement
	in a Riemannian manifold. Let 
	$(t,x)\in [-\eta-2\epsilon,\eta+2\epsilon]\times
	D^{n-1}$ be local coordinates and let $g^{(0)},g^{(1)}$
	be Riemannian metrics 
	for which the $t$-lines are geodesics parametrized 
	by arc length starting orthogonally from the
	hypersurface $t=0.$
	These coordinates are also called \emph{geodesic parallel
		coordinates} based on the hypersurface $t=0.$
	Then
	the convex combination
	$$
	g^{*}=\alpha g^{(0)} +(1-\alpha) g^{(1)}
	$$
	for a smooth function 
	$(t,x) \in [-\eta+2\epsilon,\eta+2\epsilon]\times D^{n-1}
	\longmapsto\alpha=\alpha(t,x)\in [0,1]$ 
	is a Riemannian metric, for which also the
	$t$-coordinate lines are geodesics parametrized by
	arc length, i.e. the coordinates $(t,x)$ are also
	geodesic parallel coordinates 
	for the Riemannian metric $g^*$ based on the hypersurface
	$t=0.$ As indicated in the previous proof the argument
	is that the curve $t\in [t_0,t_1] \mapsto (t,x)\in [-\eta-2\epsilon,\eta+2\epsilon]\times
	D^{n-1}$ for a fixed $x$ is locally the shortest curve joining
	$(t_0,x)$ and $(t_1,x)$ as a consequence of the
	Gau{\ss} Lemma.
	
	The statement follows also if you write down the 
	line elements of the metrics $g^{(0)},g^{(1)}$
	by
	$$
	dt^2+\sum_{i,j=1}^{n-1} g^{(l)}_{ij}(t,x) dx^i dx^j, \, l=0,1
	$$
	with respect to the
	coordinates $(t,x_1,\ldots,x_{n-1})$
	for $t\in [-\eta-2\epsilon,\eta+2\epsilon].$
	Then the  line element of $g^*$ is of the form
	$$
	dt^2+\sum_{i,j=1}^{n-1} g^{*}_{ij}(t,x) dx^i dx^j,
	$$
	with 
	metric coefficients $g^{*}_{ij}(t,x)=\alpha(t,x)g^{(0)}_{ij}(t,x)+
	\left(1-\alpha(t,x)
	\right) g^{(1)}_{ij}(t,x).$
	Therefore the lines with $x=const$ are geodesics 
	of
	the metric $g^*.$

	\smallskip
	
	(b) The analogous statement for reversible Finsler metrics is
	the following: Let
	$(t,x)\in 
	[-\eta+2\epsilon,\eta+2\epsilon]\times D^{n-1}$ 
	be local
	coordinates and $f^{(0)},f^{(1)}$ be reversible Finsler metrics
	for which the $t$-lines are geodesics parametrized by arc length
	and starting orthogonally from the hypersurface $t=0.$
	Then the convex combination
	$$f^*=\sqrt{\alpha f^{(0)}+(1-\alpha)f^{(1)}}$$
	for a smooth function 
	$(t,x)\in [-\eta+2\epsilon,\eta+2\epsilon]
	\times D^{n-1}\longmapsto \alpha(t,x)\in [0,1]$
	is a reversible Finsler metric for which also the $t$-lines
	are geodesics parametrized by arc length. 
	This follows from the Gau{\ss} Lemma for Finsler metrics,
	cf.~\cite[Sec.6.1]{BCS}
\end{remark}
%%%%%%%%%%%%%%%%%%%%%%%%%%%%%%%%%%%%%%%%%%%%%%%%%%%%%%
\section{Intersection of geodesics}
%%%%%%%%%%%%%%%%%%%%%%%%%%%%%%%%%%%%%%%%%%%%%%%%%%%%
\label{sec:intersection}
%%%%%%%%%%%%%%%%%%%%%%%%%%%%%%%%%%%%%%%%%%%%%%%%%%%%%%%%%%%%
The following statement is well known for Riemannian
metrics,
cf. for example \cite[Sec.2.3]{BG2010}, the statements 
carry over to the case of
reversible Finsler metrics, as we will show:
%%%%%%%%%%%%%%%%%%%%%%%%%%%%%%%%%%%%%%%%%%%%%%%%%%%%%%%%%%%
\begin{lemma}
	\label{lem:intersection}
	Let $g$ be a Riemannian metric resp.
	let $f$ be a reversible Finsler metric
	and $c, d: S^1\longrightarrow M$ be prime closed
	geodesics, which are geometrically distinct.
	
	\smallskip 
	
	(a) The set of \emph{double points,} resp.
	self-intersection points 
	$$DP(c):=\#\{c(t)\,;\, t\in S^1,\#c^{-1}(c(t))\ge 2\}$$
	of the closed geodesic $c$ is finite.
	
	\smallskip
	
	(b) The \emph{intersection} $I(c,d):=c(S^1)\cap d(S^1)$ is finite.	
\end{lemma}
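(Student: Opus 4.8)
The plan is to reduce both statements to the fact that two distinct geodesic arcs can agree on at most a discrete set of parameter values, together with a compactness argument. For part (b), suppose the intersection set $I(c,d)$ were infinite. Since $c(S^1)$ is compact, there is a sequence of parameters $t_k \in S^1$ and $s_k \in S^1$ with $c(t_k) = d(s_k)$ and with $t_k \to t_0$, $s_k \to s_0$ (after passing to subsequences), so that $c(t_0) = d(s_0) =: q$. I would then work in a normal (geodesic) coordinate ball $B_q(\rho)$ of small radius $\rho$, contained in a totally normal neighborhood where any two points are joined by a unique minimizing geodesic. For $k$ large both $c$ and $d$ pass through $q$-close points; the key dichotomy is: either $c'(t_0)$ and $d'(s_0)$ are linearly independent, in which case $c$ and $d$ (parametrized by arc length) separate immediately and can meet only at $q$ near the basepoint — contradicting infinitely many intersection points accumulating at $q$ — or $c'(t_0) = \pm d'(s_0)$, in which case by uniqueness of geodesics with given initial data (and reversibility of $f$, so that $t \mapsto d(-t)$ is again a geodesic) the arcs $c$ and $d$ coincide on a whole subinterval. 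In the latter case I would propagate this coincidence: the set of $t$ for which $c(t)$ lies on the trace of $d$ with matching velocity up to sign is open and closed in $S^1$, hence all of $S^1$, forcing $c(S^1) = d(S^1)$ and contradicting that $c,d$ are geometrically distinct.

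For part (a), the argument is analogous but with a twist to handle the diagonal. Suppose $DP(c)$ is infinite; pick $t_k \ne \tau_k$ in $S^1$ with $c(t_k) = c(\tau_k)$, and extract convergent subsequences $t_k \to t_0$, $\tau_k \to \tau_0$. If $t_0 \ne \tau_0$ then $c(t_0) = c(\tau_0)$ and the two branches of $c$ through this point are genuine distinct geodesic arcs, so I apply exactly the local argument of part (b) (linear independence of $c'(t_0), c'(\tau_0)$, or else coincidence on a subinterval which would force $c$ to be non-prime). If instead $t_0 = \tau_0$, the pairs $(t_k,\tau_k)$ are pinching together, and I would use the normal-coordinate expansion of $c$ near $c(t_0)$: in normal coordinates centered at $c(t_0)$ the geodesic $c$ is, to first order, a straight line with nonzero velocity, so the map $t \mapsto c(t)$ is injective in a neighborhood of $t_0$ (this is just the local injectivity of a regular curve), contradicting $c(t_k) = c(\tau_k)$ with $t_k \ne \tau_k$ both near $t_0$. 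Hence $DP(c)$ is finite; moreover at a double point $c(s) = c(t)$ the velocities $c'(s), c'(t)$ must be linearly independent, since equality up to sign would again violate primeness via the propagation argument.

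The only point that needs the Finsler hypotheses is the reduction to Riemannian-style normal coordinates: here I would invoke the facts recalled in the Remark preceding Lemma~\ref{lem:one} — that $\exp_p$ is a $C^1$-diffeomorphism near $0$ and smooth away from $0$ — which is enough to get local coordinates in which geodesics through the center are regular curves emanating in distinct directions, and to get uniqueness of geodesics with prescribed initial vector. Reversibility is used precisely so that when $c$ and $d$ meet tangentially with opposite velocities the reversed parametrization of one is still a geodesic, allowing the open-closed propagation argument to go through and produce $c(S^1) = d(S^1)$; without reversibility this step would fail.

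The main obstacle is the pinching case $t_0 = \tau_0$ in part (a): one must argue carefully that finitely many double points cannot accumulate at a single point of the curve, which comes down to local injectivity of the regular (arc-length) geodesic in normal coordinates; everything else is a routine packaging of "two distinct geodesics through a point either cross transversally or coincide on an interval" plus compactness of $S^1$ and $M$.
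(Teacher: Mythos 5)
Your proposal is correct and takes essentially the same route as the paper: extract accumulating sequences of intersection/double points by compactness of $S^1$, pass to a limit point $q$ with limit velocities, and use the local injectivity of $\exp_q$ together with uniqueness of geodesics to force the limit velocities to be proportional (impossible by primeness resp.\ geometric distinctness) or not proportional (impossible since the accumulating points would have to lie on both radial geodesics). The paper folds your two sub-cases into a single line by noting that $\exp_p$-injectivity directly yields $(t_j-t^*)v=(s_j-s^*)w$ and hence $v=\pm w$; you separate out the ``linearly independent'' and ``proportional'' branches and supply an explicit open-and-closed propagation argument where the paper simply cites that $v=\pm w$ contradicts primeness. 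You are also more explicit than the paper about the pinching case $t_0=\tau_0$ in part (a); the paper handles this implicitly by first observing that $c$ is an immersion, so $c^{-1}(c(t))$ is finite, which already rules out $s^*=t^*$. None of these are genuine differences of method, just of bookkeeping.
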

%%%%%%%%%%%%%%%%%%%%%%%%%%%%%%%%%%%%%%%%%%%%%%%%%%%%%%%%%%
\begin{proof}
	Since $c$ is an immersion and since $S^1$ is compact
	the set
	$$c^{-1}(c(t))=\{s \in S^1\,;\, c(s)=c(t)\}
	$$
	is finite for all $t \in S^1.$
	
	\smallskip
	
	(a) If the set $DP(c)$ is not finite then there exist
	sequences $s_j, t_j\in S^1$ converging to $s^*, t^*\in S^1$ 
	with $
	s_j\not=s^*;t_j\not=t^*, s_j\not=t_j$ for
	all $j$	and $c(t_j)=c(s_j)$ for all $j\ge 1.$
	Then we conclude $p:=c(t^*)=c(s^*).$ If $c'(t^*)=\pm c'(s^*)$
	then the closed geodesic is not prime. 
	Hence for $v=c'(s^*),w=c'(t^*)$ we have 
	$v\not=\pm w.$ Since $\|c'\|$ is constant, we obtain
	$\|v\|=\|w\|.$
	The exponential map
	$\exp_p:B_{\tilde{\eta}}(T_pM)
	\longrightarrow B_{\tilde{\eta}}(p)$
	is injective, we conclude:
	Since $c(t_j)=\exp_p((t_j-t^*)v)=
	c(s_j)=\exp_p((s_j-s^*)w)$ 
	for sufficiently large $j$ with
	$|t_j-t^*|\|v\|, |s_j-s^*|\|w\| <\inj$
	we conclude
	that $(t_j-t^*)v=(s_j-s^*)w$ holds, i.e.
	$v =\pm w,$ which is a contradiction.
	
	\smallskip
	
	(b) The argument is similar, if the set
	$I(c,d)$ is infinite, then there are
	sequences $s_j, t_j\in S^1$ converging to $s^*, t^*\in S^1$ 
	with $
	s_j\not=s^*;t_j\not=t^*, s_j\not=t_j$ for
	all $j$	and $c(t_j)=d(s_j)$ for all $j\ge 1.$
	Then we conclude $p:=c(t^*)=d(s^*).$ If $c'(t^*)=\pm d'(s^*)$
	then the closed geodesics $c,d$ are geometrically
	equivalent. 
	Hence for $v=c'(s^*),w=d'(t^*)$ we have 
	$v\not=\pm w.$ Since $\|c'\|$ is constant, we obtain
	$\|v\|=\|w\|.$
	Since $c(t_j)=\exp_p((t_j-t^*)v)=
	d(s_j)=\exp_p((s_j-s^*)w)$ we conclude
	for sufficiently large $j$ with
	$|t_j-t^*|\|v\|, |s_j-s^*|\|w\| <\inj$
	that $(t_j-t^*)v=(s_j-s^*)w$ holds, i.e.
	$v/\|v\| =\pm w/\|w\|,$ which is a contradiction.
\end{proof}

%%%%%%%%%%%%%%%%%%%%%%%%%%%%%%%%%%%%%%%%%%%%%%%%%%%%%%%%%%%%%%
\begin{remark}[Self-intersection of closed geodesics
	for non-reversible Finsler metrics]
	Note that the last argument does not work for
	non-reversible Finsler metrics.
	If $\gamma(t)$ is a geodesic
	with $\gamma_v(0)=p, \gamma_v'(0)=v,$ 
	then the equation $\gamma_v(t)=\exp_p(tv)$
	only holds for $t\ge 0.$ In general
	$\gamma_v(-t)\not=\exp_p(-tv)=\gamma_{-v}(t).$
	Note that two closed geodesics 
	$c_1,c_2:S^1\longrightarrow M$
	of a non-reversible
	Finsler metric are 
	called \emph{geometrically equivalent} only
	if their traces $c_1(S^1)=c_2(S^1)$ 
	\emph{and} their
	orientations coincide.
	The Katok-examples 
	as non-reversible perturbations of the standard
	Riemannian metric on a sphere 
	yield metrics for which there are
	two 
	geometrically distinct
	closed geodesics $c_1,c_2:S^1\longrightarrow S^n$
	which have the same trace but different orientation
	and length. These metrics are explained in \cite{Zi}, 
	\cite[Sect.11]{Ra2004},  and
	\cite{Bryant}. Hence this is an example of a non-reversible
	Finsler metric for which there are geometrically distinct
	closed geodesics intersecting in an infinite number
	of points.	
	%One can construct for example a 
	%non-reversible Finsler metric
	%with a closed geodesic
	%$c:S^1=\R/\z\longrightarrow M$ such that
	%$c(1/2-t)=c(t)$ for all $|t|\le 1/10.$	
	%resp. with two geometrically distinct closed
	%geodesics which intersect in a geodesic segment.
	
\end{remark}
%%%%%%%%%%%%%%%%%%%%%%%%%%%%%%%%%%%%%%%%%%%%%%%%%%%%%%%%%%%
%	\label{rem:several-geodesics}
%%%%%%%%%%%%%%%%%%%%%%%%%%%%%%%%%%%%%%%%%%%%%%%%%%%%%%%%%
\section{Perturbing intersecting geodesic segments}	
%%%%%%%%%%%%%%%%%%%%%%%%%%%%%%%%%%%%%%%%%%%%%%%%%%%%%%%%%
\label{sec:perturbing-intersecting}
%%%%%%%%%%%%%%%%%%%%%%%%%%%%%%%%%%%%%%%%%%%%%%%%%%%%%%%%%
For a point $p\in M$ and $\eta,\epsilon >0$
with $6\epsilon <\eta$ we have defined the 
spherical shell
$A(\eta,\epsilon)
=\{x \in M\,;\, \eta<d(x,p)<\eta+7\epsilon\}$ 
around $p.$
For a geodesic segment
$c_j:[-2\eta,2\eta]\longrightarrow M, j=1,\ldots,N$ 
parametrized by arc length
with $p=c_j(0)$ and $v_j:=c_j'(0)$
we recall the definition of the sets
$U_j(\eta,\epsilon)=U_{v_j}(\eta,\epsilon)$
which are subsets of the
of the spherical shell $A(\eta,\epsilon)$ around $p,$  
cf. Section~\ref{sec:perturb} and Lemma~\ref{lem:one}.
%%%%%%%%%%%%%%%%%%%%%%%%%%%%%%%%%%%%%%%%%%%%%%%%%%%%%%%%%%%
\begin{lemma}%[Perturbing intersecting geodesic segments]
	\label{lem:perturbation_several}
	Let $p \in M$ be a point on a compact
	manifold of dimension $n \ge 3$
	with a Riemannian metric $g$ or with
	a reversible Finsler metric $f.$
	Assume that $\eta>0$ satisfies $\eta<\inj/3,$
	here $\inj$ is the injectivity radius.
	Let $c_j:[-2\eta,2\eta]
	\longrightarrow M, j=1,2\ldots,N$ 
	be geodesic segments
	parametrized by arc length with $c_j(0)=p,$
	for which the initial directions
	for $j\not=k$ satisfy
	$v_j=c_j'(0)\not=\pm c_k'(0)=\pm v_k.$

	For sufficiently small $\epsilon>0$
	in any neigborhood of the metric $g$ resp. $f$
	with respect to the strong $C^r$-topolgy with
	$r\ge 2$ resp. $r\ge 4$ there is a Riemannian
	metric $\overline{g}$ resp. a reversible Finsler metric
	$\overline{f}$ with geodesic segments
	$\overline{c}_j:[-2\eta,2\eta]\longrightarrow
	M, j=1,\ldots,N$
	parametrized by arc length which coincide with $c_j$ 
	on the set $[-2\eta,-\eta-4\epsilon]
	\cup [\eta+4\epsilon,2\eta].$
	These geodesic segments do not intersect each other,
	i.e. $\overline{c}_j(S^1)\cap
	\overline{c}_k(S^1)=\emptyset.$
	The metrics $g$ and $\overline{g}$ resp.
	$f$ and $\overline{f}$ differ only
	on the union of the pairwise disjoint sets
	$U_j(\eta,\epsilon),
	j=1,\ldots,N.$ This is a subset of
	the spherical shell $A(\eta,\epsilon)$
	around $p.$
\end{lemma}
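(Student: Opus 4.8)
The plan is to apply Lemma~\ref{lem:one} at the common point $p$ to each segment $c_j$ separately — with an orthogonal perturbation direction $w_j$ and a perturbation size $s_j>0$ to be fixed below — and then to glue the $N$ resulting local perturbations, using that their supports $U_j(\eta,\epsilon):=U_{v_j}(\eta,\epsilon)$ are pairwise disjoint.

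First I would fix $\epsilon$. Since $\eta<\inj/3$, the exponential map $\exp_p$ is a diffeomorphism on $B_{2\eta}(T_pM)$; hence (using $v_j\ne\pm v_k$) the $2N$ points $\exp_p(\pm\eta v_j)$ are pairwise distinct, and the traces $c_j([-2\eta,2\eta])=\exp_p([-2\eta,2\eta]v_j)$ (with $c_j(t)=\exp_p(tv_j)$ for $|t|\le2\eta$ also in the reversible Finsler case, by reversibility) meet pairwise only at $p$. Consequently, for all sufficiently small $\epsilon$: the sets $U_j(\eta,\epsilon)$ are pairwise disjoint subsets of the spherical shell $A(\eta,\epsilon)$; for $j\ne k$ the tube $Tb_{v_j}(\eta,\epsilon)$ is disjoint from $U_k(\eta,\epsilon)$; and $\xi_{v_j,w_j}$ is a diffeomorphism with geodesic coordinate lines (a property of $\epsilon$ and $v_j$ only, not of $w_j$), so Lemma~\ref{lem:one} applies to each pair $(v_j,w_j)$.

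Next comes the choice that uses $n\ge3$. By Lemma~\ref{lem:one}, for data $(v_j,w_j)$ the perturbed geodesic $\bar c_j$ equals, on $|t|\le\eta+2\epsilon$, the unit-speed geodesic $t\mapsto\exp^{\perp,v_j}(t,s_jw_j)$ issuing from $\exp_p(s_jw_j)$; writing $\exp_p^{-1}(\bar c_j(t))=\Phi_j(t,s_j)$ gives a $C^1$ map with $\Phi_j(t,0)=tv_j$ and $\Phi_j(0,\sigma)=\sigma w_j$. I would then select unit vectors $w_j\in T_p^{\perp,v_j}M$ and numbers $\sigma_j>0$ with
$$\sigma_jw_j-\sigma_kw_k\notin\operatorname{span}(v_j,v_k)\qquad(j\ne k).$$
This is possible because, for $n\ge3$, each $\operatorname{span}(v_j,v_k)$ is a proper (two-dimensional) subspace of $T_pM$: the corresponding ``bad'' locus in the space of admissible $(w_1,\dots,w_N,\sigma_1,\dots,\sigma_N)$ is a proper real-analytic subset, so the finitely many conditions can be met with the $\sigma_j$ arbitrarily small and positive. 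Having fixed the $w_j$ and $\sigma_j$, I put $s_j:=\sigma_j s$ and apply Lemma~\ref{lem:one} to each $j$ with a small parameter, obtaining families $g^{(s)}_{(j)}$ that differ from $g$ only inside $U_j(\eta,\epsilon)$; as the $U_j$ are disjoint, $\bar g:=g$ off $\bigcup_jU_j$ and $\bar g:=g^{(s_j)}_{(j)}$ on $U_j$ is a Riemannian (resp.\ reversible Finsler) metric of the same regularity, with $\|\bar g-g\|_{C^r}\to0$ as $s\to0$. Since $Tb_{v_j}(\eta,\epsilon)$ contains $U_j$ and is disjoint from every $U_k$, $k\ne j$, the metric $\bar g$ coincides with $g^{(s_j)}_{(j)}$ on $Tb_{v_j}(\eta,\epsilon)$, which contains the trace of $\bar c_j$; hence $\bar c_j$ is a $\bar g$-geodesic parametrized by arc length, agreeing with $c_j$ on $[-2\eta,-\eta-4\epsilon]\cup[\eta+4\epsilon,2\eta]$.

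The remaining step — which I regard as the main obstacle — is that the traces $\bar c_j([-2\eta,2\eta])$ are pairwise disjoint once $s$ is small. Uniformly in $t$ one has $d(\bar c_j(t),\exp_p(tv_j))\le Cs$, while $d(\exp_p(tv_j),\exp_p(t'v_k))\ge A(|t|+|t'|)$ for constants $A,C>0$ (because $v_j,v_k$ are linearly independent and $\exp_p$ is bi-Lipschitz on $B_{2\eta}$); hence $\bar c_j(t)\ne\bar c_k(t')$ as soon as $|t|+|t'|>(2C/A)s$ — in particular whenever $|t|,|t'|\ge\eta+4\epsilon$, where $\bar c_j(t)=\exp_p(tv_j)$ and $\bar c_k(t')=\exp_p(t'v_k)$. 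On the complementary range $|t|+|t'|\le(2C/A)s$ I would rescale $t=s\hat\tau$, $t'=s\hat\tau'$ with $\hat\tau,\hat\tau'$ bounded and use the uniform expansion $\exp_p^{-1}(\bar c_j(t))=\Phi_j(s\hat\tau,\sigma_js)=s(\hat\tau v_j+\sigma_jw_j)+o(s)$: a coincidence $\bar c_j(t)=\bar c_k(t')$ along some sequence $s\to0$ would, after passing to a subsequential limit of $(\hat\tau,\hat\tau')$, force $\sigma_jw_j-\sigma_kw_k\in\operatorname{span}(v_j,v_k)$, contradicting the choice above. It then suffices to take $s$ small enough that in addition each $s_j$ lies in the range allowed by Lemma~\ref{lem:one} and $\bar g$ lies in the prescribed $C^r$-neighborhood of $g$; the Finsler case is identical, using that $\exp_p$ is a $C^1$-diffeomorphism and the reversible Finsler version of Lemma~\ref{lem:one}. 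The genuine difficulty throughout is this simultaneous ``un-crossing'' at $p$: the configuration $c_1(0)=\dots=c_N(0)=p$ is non-generic and can be broken by an arbitrarily small perturbation precisely because $\dim M\ge3$, and the blow-up at $p$ in the last step is what converts this into the explicit linear-algebra condition on the $w_j$ and $\sigma_j$.
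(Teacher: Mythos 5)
Your proposal is correct, and it takes a genuinely different route from the paper's proof, so it is worth contrasting the two.

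Both proofs start the same way: apply Lemma~\ref{lem:one} once per segment $c_j$ with a chosen orthogonal direction $w_j$ and amplitude $s_j$, and glue the $N$ resulting metrics on the pairwise disjoint supports $U_j(\eta,\epsilon)$; the gluing step and the regularity/closeness estimates are essentially identical. Where you diverge is the argument that the perturbed segments $\overline c_j$ can be made pairwise disjoint. The paper introduces the local surfaces $P_j=P(v_j,w_j)$ that carry the perturbed geodesics, and then splits on dimension: for $n\ge4$ one picks the $w_j$ so that the surfaces $P_j$ meet pairwise only in $p$, while for $n=3$ the surfaces necessarily meet along a transversal curve $\gamma_{jk}$ through $p$ (Equation~\eqref{eq:intersection_curve}), and the amplitudes $s_1,\dots,s_N$ are chosen afterwards to avoid finitely many forbidden values $s^*_{jk}$. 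Your proof avoids this case split entirely: you fix a direction $(w_1,\dots,w_N)$ and a slope vector $(\sigma_1,\dots,\sigma_N)$ once, put $s_j=\sigma_j s$, and show that a hypothetical coincidence $\overline c_j(t)=\overline c_k(t')$ forces $|t|,|t'|=O(s)$ (via the uniform lower bound $d(\exp_p(tv_j),\exp_p(t'v_k))\ge A(|t|+|t'|)$); a parabolic rescaling $t=s\hat\tau$, $t'=s\hat\tau'$ then reduces, in the limit $s\to0$, to the linear-algebra condition $\sigma_jw_j-\sigma_kw_k\in\operatorname{span}(v_j,v_k)$, which you have excluded by a transversality choice available exactly when $n\ge3$. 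Your linear condition is in fact the infinitesimal expression of the paper's finite condition (the bad value $s^*_{jk}$ is precisely the $s_k$ for which $s_jw_j-s_kw_k$ lands in $\operatorname{span}(v_j,v_k)$, to leading order), so the two arguments are compatible, but your blow-up treats all $n\ge3$ uniformly and packages the obstruction as a single explicit transversality condition, whereas the paper's version is more pictorial via the intersection geometry of the surfaces $P_j$, which also motivates the heuristic picture in Remark~\ref{rem:nequals3}. The only point I would tighten in a final write-up is a sentence justifying the uniform bi-Lipschitz constants $A,C$ over the finite index set $\{j,k\}$, and a remark that the $C^1$ regularity of the Finsler exponential map suffices for the first-order expansion used in the rescaling step; you already gesture at both.
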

%%%%%%%%%%%%%%%%%%%%%%%%%%%%%%%%%%%%%%%%%%%%%%%%
%Lemma~\ref{lem:one}
\begin{proof}
	Since $3\eta<\inj$ the geodesic segments
	are injective and the point $p$
	is the only point lying on distinct geodesic
	segments $c_j.$
	Then one can choose $\epsilon \in(0,\eta/6)$ 
	sufficiently small as in 
	Lemma~\ref{lem:one}
	such that 
	the maps $\xi_{v_j,w_j}$ defined in
	Equation~\eqref{eq:xi} are diffeomorphisms
	and
	the subsets
	$U_j(\eta,\epsilon)$ of the spherical shell
	$A(\eta,\epsilon)$ are pairwise disjoint.
	
	We can choose $\epsilon \in (0,\eta/6)$
	sufficiently small such that  the 
	intersections 
	%$U_j(\eta,\epsilon):=
	$T_{v_j}(\eta,\epsilon)\cap A(\eta,\epsilon),
	j=1,2,\ldots,N$
	of the
	tubular neighborhoods
	$T_{v_j}(\eta,\epsilon)$ of the geodesic $c_j$ with
	the spherical shells $A(\eta,\epsilon)$ are pairwise disjoint.
	Since the sets $U_j(\eta,\epsilon)$ are subsets
	of the these intersections we conclude that also
	the sets $U_j(\eta,\epsilon),j=1,\ldots, N$ are 
	pairwise disjoint.
	
	If the dimension $n$ is at least four, we can find
	unit vectors $w_j,j=1,\ldots, N$ which are pairwise distinct
	such that 
	for sufficiently small $\eta>0, \epsilon \in (0,\eta/6)$
	the local surfaces $P_j=P(v_j,w_j), j=1,2,\ldots, N$ 
	in a neighborhood of $p$
	defined by $v_j,w_j,$ cf. Equation~\ref{eq:pvw}, pairwise
	only meet in the point $p.$ I.e.
	$(P_j-\{p\})\cap (P_k-\{p\})=\emptyset$
	for all $j,k, j\not=k.$ 
	
	If the dimension $n=3$ then the local surfaces
	$P_j,P_k$ 
	for sufficiently small $\epsilon>0$
	for distinct $j\not=k$ intersect
	in a smooth curve
	%%%%%%%%%%%%%%%%%%%%%%%%%%%%%%%%%%%%%%%%%%%%
	\begin{equation}
		\label{eq:intersection_curve}
		\gamma_{jk}(t)=\zeta_{v_j,w_j}(t_{jk}(u),x_{jk}(u),0)
		=\zeta_{v_k,w_k}(t_{kj}(u),x_{kj}(u),0)
	\end{equation}
	%%%%%%%%%%%%%%%%%%%%%%%%%%%%%%%%%%%%%%%%%%
	parametrized by arc length. 
	Here the curve is defined on a sufficiently small
	interval $[u_1,u_2]$ with $u_1<0<u_2$ such that 
	$t_{jk},t_{kj},x_{jk},x_{kj}:
	[u_1,u_2]\longrightarrow \R$ 
	are strictly monotone.
	This is possible since $\gamma_{jk}'(0)$ is neither a multiple
	of $c_j'(0)$ nor of $c_k'(0).$
	%Hence we can reparametrize the curve
	%$\gamma_{jk}$ as follows:
	%$\gamma_{jk}(u_j)=\zeta_{v_j,w_j}(u_j,x_{jk}(u_j),0,\%ldots,0)$
	%resp.
	%$\gamma_{kj}(u_k)
	%=\zeta_{v_k,w_k}(u_k,x_{kj}(u_k),0,\ldots,0).$
	%This curves are defined on a sufficiently small %interval
	%containing $0.$ 
	
	Without loss of generality we can assume that $v_j,w_j$
	are orthogonal to each other, i.e. $\langle v_j,w_j\rangle=0$
	for all $j=1,\ldots,N.$
	
	It follows from Lemma~\ref{lem:one} that for
	any $j=1,\ldots,N$ there is 
	a one-parameter family of Riemannian  $g_j^{(s)}, s\in [0,s_0], s_0>0$ 
	(resp.
	Finsler metrics)
	with $g_j^{(0)}=g$ which coincides with $g$
	on the complement of  $U_j^{\pm}(\eta,\epsilon)$
	and satisfies the following:
	The metrics $g_j^{(s)}$ have injective geodesics
	$c_{j,s}:[-2\eta,2\eta]\longrightarrow M, j=1,\ldots,N$
	parametrized by arc length which coincide with
	$c_j$ on $[-2\eta,-\eta-4\epsilon]\cup 
	[\eta+4\epsilon, 2\eta]$  and which
	are of the form
	$c_{j,s}(t)=\xi_{v_j,w_j}(t,u_{j,s}(t),0,\ldots,0)$
	for $t \in [-2\eta,2\eta]$ as described in
	Lemma~\ref{lem:one}. In particular the geodesic
	$c_{j,s}$ lies in the local surface
	$P_j$ and does not meet $p$ for $s>0,$
	and for $t \in [-\eta,\eta]$
	and sufficiently small positive $s:$
	%%%%%%%%%%%%%%%%%%%%%%%%%%%%%%%%%%%%%%%%%%%%%%%
	\begin{equation}
		\label{eq:cjs}	
		c_{j,s}(t)=\xi_{v_j,w_j}(t,s,0,\ldots,0)\,.
	\end{equation}
	%%%%%%%%%%%%%%%%%%%%%%%%%%%%%%%%%%%%%%%%%%%%%%%
	If $n\ge 4$ it follows that the
	the geodesics $c_{j,s}$ do not
	intersect pairwise for sufficiently small
	and positive $s,$
	since the local surfaces $P_j,P_k$ for
	distinct $j \not=k$ do only intersect in 
	the point $p.$
	If the dimension $n=3$ the intersection
	of $P_j,P_k$ is described above,
	cf. Equation~\eqref{eq:intersection_curve}. 
	Then we define for $s_1,\ldots,s_N >0$
	sufficiently small a metric
	$\overline{g}^{(s_1,\ldots,s_n)}$ by
	$g_j^{(s_j)}$ on the set
	$U_j(\eta,\epsilon)$ and by $g$ outside
	the union $U_1(\eta,\epsilon)\cup\ldots\cup
	U_N(\eta,\epsilon).$ From the form of the
	intersection of the local surfaces
	$P_j\cap P_k$ as described in 
	Equation~\eqref{eq:intersection_curve}
	and the form of the geodesics $c_{j,s}$
	in Equation~\eqref{eq:cjs} 
	we see that for a given $s>0$
	we can choose the parameters
	$s_1,s_2,\ldots,s_N\in (0,s)$ such that the
	geodesic segments $c_{j,s_j}(t)=\xi_{v_j,w_j}(t,s_j,0,), t\in [-\eta,\eta]$
	for distinct $j, k$ do not intersect.
	This is possible since for distinct
	$j,k$ there are unique $s^*_{jk},t^*_{j,k}$
	with $c_{j,s_j}(t)=c_{k,s^*_{jk}}(t^*_{jk}),$
	i.e. one has to choose $s_k\not=s^*_{jk},$
	for all $k\not=j,$
	cf. the description
	of the intersection of the local surfaces
	$P_j\cap P_k$ given in
	Equation~\eqref{eq:intersection_curve}.
	%%%%%%%%%%%%%%%%%%%%%%%%%%%%%%%%%%%%%%%%%%%%%%%%%
\end{proof}
%%%%%%%%%%%%%%%%%%%%%%%%%%%%%%%%%%%%%%%%%%%%%%%%%%%
\begin{remark}
	\label{rem:nequals3}
	To understand the argument for dimension $n=3$ it may
	be helpful to consider the following 
	special case. Let
	$c_j:\R\longrightarrow \R^3,j=1,\ldots,N$ be 
	$N$ pairwise distinct straight lines $c_j$ in Euclidean $3$-space
	$\R^3$ intersecting in $p.$
	Then one can find $N$ pairwise distinct planes
	$E_j$ containing $c_j$ such that the intersection
	of the planes $E_j\cap E_k$ for distinct $j,k$ is a straight line
	$c_{jk}$ which is different
	from any of the lines $c_j.$ Then one can find for any $\epsilon >0$
	and any line
	$c_j, j=1,\ldots,N$ a parallel line $c_j'$ lying in $E_j$
	with distance $<\epsilon$ from $c_j$
	such that the intersection $c_j'\cap c_k'$ 
	for distinct $j,k$ is empty.
	In the Proof the curves $c_j''=c_{j,s_j}$ correspond then
	to curves which near $p$ are the straight segments $c_j'$ and outside
	a spherical shell $A(\eta,\epsilon)$ coincide with
	the original straight line $c_j.$ The case $N=2$ is 
	obvious,
	for $N\ge 3$ one has to choose the distances
	from $c_j$ and $c_j'$ to avoid intersections
	of the straight lines $c_j'.$ Then the Lemma implies
	that we can perturb the Euclidean metric on $\R^3$
	in a spherical shell around $p$
	such that the curves $c_j''$ become geodesics.
\end{remark}
%%%%%%%%%%%%%%%%%%%%%%%%%%%%%%%%%%%%%%%%%%%%%%%%%%%%%%%%%%%%%%%
%%%%%%%%%%%%%%%%%%%%%%%%%%%%%%%%%%%%%%%%%%%%%%%%%%%%%%%%%%%%%%%
%%%%%%%%%%%%%%%%%%%%%%%%%%%%%%%%%%%%%%%%%%%%%%%%%%%%%%%%%%%%%%%
\section{Proof of Theorem~\ref{thm:one}}
\label{sec:proof}	
%%%%%%%%%%%%%%%%%%%%%%%%%%%%%%%%%%%%%%%%%%%%%%%%%%%%%%%%%%%%%%%%
\begin{proof}	
	Let $\mathcal{G}=\mathcal{G}^r(M)$ be the space of Riemannian metrics
	with the strong $C^r$-topology with $r\ge 2.$
	A closed geodesic $c$ is \emph{non-degenerate} if
	there is no periodic Jacobi field $Y=Y(t)$ which is orthogonal
	to the closed geodesic, i.e. $g(Y(t),c'(t))=0$ holds for all $t.$
	This also implies that $1$ is not
	an eigenvalue of the \emph{linearized Poincar\'e map}
	$P_c.$  
	We conclude from the \emph{bumpy metrics theorem}:
	For $a>0$ the set $\mathcal{G}(a)$ of Riemannian metrics
	for which all closed geodesics with length $\le a$
	are non-degenerate, is an open and dense subset of 
	$\mathcal{G}=\mathcal{G}(M).$
	
	It also follows that there are only finitely many geometrically
	distinct and prime closed geodesics $\tilde{c}_1,\ldots,\tilde{c}_r$	
	such that all closed geodesics of length $\le a$ are geometrically
	equivalent to one of the closed geodesics $\tilde{c}_j, j=1,\ldots,r.$
	
	Let $\mathcal{G}^*(a)\subset \mathcal{G}(a)$ 
	be the set of Riemannian metrics, such that
	all prime closed geodesics $\tilde{c}_1,\ldots,\tilde{c}_r$
	of length $\le a$ are simple and do not intersect
	each other. This is an open subset of the set $\mathcal{G}(a),$
	since there are only finitely many geometrically distinct
	closed geodesics of length $\le a$ in $\mathcal{G}^*(a),$
	cf. \cite[Lem.2.4]{Ra94}, \cite[\S 4]{An}.
	It remains to prove that the set $\mathcal{G}^*(a)
	\subset \mathcal{G}(a)$ is dense.
	
	It follows from Lemma~\ref{lem:intersection}
	that the union 
	$DIP(a)=DP(\tilde{c}_1)\cup\ldots\cup DP(\tilde{c}_r)
	\cup \bigcup_{j\not=k}I(\tilde{c}_j,\tilde{c}_k)$
	of the double points 
	$DP(\tilde{c}_j), j=1,\ldots,r$ of the prime closed 
	geodesics of length $\le a$ and
	the intersection points
	$I(\tilde{c}_j,\tilde{c}_k)$ for distinct $j,k$ is a finite
	set.
	Then we can find a sufficiently small 
	$\eta>0$ such that the geodesic
	balls $B_p(2\eta)$ for $p \in DIP(a)$
	are disjoint and such that the following holds: 
	
	For $p \in DIP(a)$ the  intersection
	$B_p(2\eta)\cap \left\{
	\tilde{c}_1(S^1)\cup \ldots \cup 
	\tilde{c}_r(S^1)
	\right\}$
	of the geodesic ball
	$B_p(2\eta)$ of radius $2\eta$
	around $p$ and the traces of the prime
	closed geodesics $\tilde{c}_1,\ldots,
	\tilde{c}_r$ consists only of 
	$N$ geodesic segments of length $4\eta$
	with midpoint $p.$ 
	These are geodesic segments of one of the
	closed geodesics $\tilde{c}_1,\ldots,\tilde{c}_r$
	if $p$ is a double point of this closed geodesic.
	If a geodesic $\tilde{c}_j$ enters the
	geodesic ball $B_p(2\eta),$
	i.e. if $c_j(s)\in B_p(2\eta)$
	for some $s \in S^1$
	then there is a parameter 
	$s_1$ with $|s-s_1|<2\eta$ such that
	$p=\tilde{c}_j(s_1).$
	
	By a linear change of the
	parametrization these $N$ geodesic segments are
	geodesic segments $c_1,\ldots,c_N:
	[-2\eta,2\eta]\longrightarrow M$
	parametrized by arc length with
	$c_j(0)=p$ and with  $v_j=c_j'(0), j=1,\ldots,N,$
	such that $v_j\not=\pm v_k$ for distinct $j\not=k.$
	
	These geodesic segments satisfy 
	the assumptions of
	Lemma~\ref{lem:perturbation_several}.
	Since the geodesic balls of radius $2\eta$
	around the points $p \in DIP(a)$ are
	disjoint we can apply 
	Lemma~\ref{lem:perturbation_several}
	for every point 
	$p \in DIP(a)$
	separately and change the
	metric in the geodesic ball $B_p(2\eta).$
	
	Hence we obtain in any neighborhood of $g\in 
	\mathcal{G}(a)$ a metric
	$\overline{g}$ with prime closed
	geodesics $\overline{c}_1,\ldots,\overline{c}_r.$
	The length $L(\overline{c}_j)$
	equals the length $L(c_j)$ of $c_j.$
	And for every
	$j=1,\ldots,N$ the closed geodesic $\tilde{c}_j$
	coincides with $c_j$ outside the
	union 
	$$
	\bigcup_{p\in DIP(a)} B_p(2\eta)
	$$
	of the geodesic balls of radius $2\eta$ around
	the finitely many points $p\in DIP(a).$
	
	In any sufficiently small neighborhood of 
	a metric $g \in \mathcal{G}(a)$ the number
	of closed geodesics of length $\le a$ cannot
	increase, since the closed geodesics seen
	as periodic orbits of the geodesic flow are
	non-degenerate, cf.~\cite[\S 4, i)]{An}.
	Therefore the geodesics 
	$\overline{c}_1,\ldots,\overline{c}_r$
	are the prime closed geodesics of length $\le a$ of
	the metric $\overline{g}$
	up to geometric equivalence.
	Since the geodesics $\overline{c}_j, j=1,\ldots,N$ are simple
	and do not intersect each other we have shown
	that $\overline{g}\in\mathcal{G}^*(a).$
	Therefore we have shown that in any neighborhood of
	the metric $g$ 
	there is a metric $\overline{g}\in\mathcal{G}^*(a).$
	
	Then the intersection
	\begin{equation}
		\mathcal{G}^*=\bigcap_{k \in \n}\mathcal{G}^*
		(k)
	\end{equation}
	is a residual subset. 
	The set $\mathcal{G}^*(k)$ is the set of 
	Riemannian metrics for which the finitely many
	geometrically distinct
	prime closed geodesics of length $\le k$
	are simple, do not intersect each other
	and are non-degenerate.
	Therefore $\mathcal{G}^*$ is the set of
	Riemannian metrics for which all closed geodesics are 
	non-degenerate and all prime closed geodesics are
	simple. In addition distinct closed geodesics do not
	intersect.
	
	The argument in the Finsler case is the same, we only have to
	use the 
	following bumpy metrics theorem for reversible Finsler metrics:
\end{proof}
%%%%%%%%%%%%%%%%%%%%%%%%%%%%%%%%%%%%%%%%%%%%%%%%%%%%%%%%%%%%%%%%%
\begin{theorem}[Bumpy metrics theorem for 
	\emph{reversible} Finsler metrics]
	\label{thm:rev}
	For a compact differentiable manifold a $C^r$-generic 
	\emph{reversible}
	Finsler for $r\ge 4$
	metric is bumpy.
\end{theorem}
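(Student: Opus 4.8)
The plan is to follow the proof of the bumpy metrics theorem for (not necessarily reversible) Finsler metrics, \cite[Thm.~3]{RT2020} --- itself an adaptation of Anosov's proof \cite{An} of the Riemannian case --- and to check that all perturbations used there can be kept reversible. Fix $r\ge 4$, finite; let $\mathcal{F}=\mathcal{F}^r(M)$ be the Banach manifold of $C^r$ reversible Finsler metrics with the strong $C^r$-topology, and for $a>0$ let $\mathcal{F}(a)\subset\mathcal{F}$ be the set of metrics all of whose closed geodesics of length $\le a$ --- prime or iterated --- are non-degenerate, i.e.\ admit no periodic orthogonal Jacobi field. The bumpy reversible Finsler metrics are $\bigcap_{k\in\n}\mathcal{F}(k)$, so by the Baire category theorem it suffices to show that each $\mathcal{F}(a)$ is open and dense in $\mathcal{F}$. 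Openness is verified exactly as in the Riemannian and general Finsler cases: a metric in $\mathcal{F}(a)$ has only finitely many geometrically distinct closed geodesics of length $\le a$, each a non-degenerate periodic orbit of the geodesic flow on the unit tangent bundle $T^1M$, and these orbits persist under $C^r$-small perturbations with no new closed geodesics of length $\le a$ appearing, cf.\ \cite[\S 4]{An}, \cite{RT2020}, \cite[Lem.~2.4]{Ra94}.

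For density one works on $T^1M$ with the geodesic flow $\phi^t$. Since $f$ is reversible the flip $\iota\colon T^1M\to T^1M$, $\iota(x,v)=(x,-v)$, conjugates $\phi^t$ to $\phi^{-t}$. The closed geodesics of length $\le a$ are the periodic orbits of period $\le a$; these fill a compact subset of $T^1M$ which may be covered by finitely many flow boxes. In each flow box the local perturbation lemma of \cite{RT2020} supplies a $C^r$-small, box-supported perturbation of $f$ realizing an arbitrary prescribed change of the relevant finite jet of $\phi^t$ along orbit segments crossing the box; together with the Sard--Smale transversality argument over $\mathcal{F}$ this forces, for a residual set of metrics, every period-$\le a$ orbit --- in particular every iterate $c^m$ with $mL(c)\le a$ --- to be non-degenerate. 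That a metric which is not yet bumpy up to $a$ may carry a continuum of closed geodesics of length $\le a$ is no obstacle, since the Sard--Smale formalism (as in \cite{An}) handles this without enumerating closed geodesics.

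The only point needing new verification is that these local perturbations lie in $\mathcal{F}$. As $\iota$ is fixed-point free on $T^1M$ --- indeed $d\bigl((x,v),(x,-v)\bigr)>0$ in any auxiliary metric --- every sufficiently small flow box $U$ satisfies $U\cap\iota(U)=\emptyset$; and for boxes disjoint from the (for each fixed closed geodesic finite, by Lemma~\ref{lem:intersection}(a), and globally nowhere dense) set of anti-parallel self-return points, no period-$\le a$ orbit meets both $U$ and $\iota(U)$. For such a box, given a perturbation $h=h(x,v)$ of $f^2$ supported in $U$ as in \cite{RT2020}, replace it by its flip-symmetrization $\tfrac12\bigl(h(x,v)+h(x,-v)\bigr)$, supported in $U\cup\iota(U)$ and making $f$ into a reversible Finsler metric; its restriction to $U$ is unchanged and its part on $\iota(U)$ meets no period-$\le a$ orbit, so it acts on those orbits exactly as the original perturbation and the transversality statements of \cite{RT2020} survive the restriction to reversible perturbations. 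The finitely many boxes meeting the anti-parallel self-return locus are treated by a minor variant, using that on a period-$\le a$ orbit the linearized return map and its $\iota$-conjugate have conjugate spectra.

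Performing this over the finitely many boxes of the cover --- each replaced by the $\iota$-invariant pair $U\cup\iota(U)$, kept pairwise disjoint --- yields, in any $C^r$-neighbourhood of a given reversible Finsler metric, a reversible Finsler metric in $\mathcal{F}(a)$; hence $\mathcal{F}(a)$ is dense, and $\bigcap_{k\in\n}\mathcal{F}(k)$ is the sought residual set. The main obstacle is the reversible form of the local perturbation lemma: one must confirm that passing to the even (flip-symmetric) part of the perturbation space does not shrink the image of the linearization that controls the spectra of the linearized Poincaré maps --- which is exactly what the disjointness $U\cap\iota(U)=\emptyset$, together with the freedom to place the flow boxes away from the anti-parallel self-returns, is designed to guarantee.
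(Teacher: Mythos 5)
Your proposal takes essentially the same approach as the paper: adapt the proof of the Finsler bumpy metrics theorem of~\cite[Thm.~3]{RT2020} by restricting the perturbation space to flip-symmetric perturbations, which is exactly the paper's one-line modification of requiring the bump function $\phi$ to be even. The elaborate detour through flow boxes, disjointness of $U$ and $\iota(U)$, and the ``anti-parallel self-return locus'' is unnecessary --- for a closed geodesic parametrized by arc length a self-intersection with opposite velocities cannot occur (as observed in the proof of Lemma~\ref{lem:intersection}), and the paper simply symmetrizes the local perturbation by making $\phi$ even without needing any of this machinery.
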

%%%%%%%%%%%%%%%%%%%%%%%%%%%%%%%%%%%%%%%%%%%%%%%%%%%%%%%%%%%%%%%%%
\begin{proof}
	We consider the space $\mathcal{F}_{rev}^r(M)$
	of reversible Finsler metrics on the compact manifold
	$M$ with respect to the strong $C^r$-topology. 
	The only necessary modification in the proof of~\cite[Thm.3]{RT2020}
	is that we have to choose the function $\phi: \R^n \longrightarrow
	\R$ in addition to be even, i.e. $\phi(-y)=\phi(y)$ for all
	$y \in \R^n.$		
\end{proof}
%%%%%%%%%%%%%%%%%%%%%%%%%%%%%%%%%%%%%%%%%%%%%%%%%%%%%%%%%%%%%%%%%%%
\begin{remark}[Surfaces]
	\label{rem:surfaces}
	The case of dimension $2,$ i.e. surfaces, is 
	quite different.	
	One can show that for any Riemannian metric 
	and any reversible Finsler metric on a 
	closed surface there exists a simple closed geodesic.
	If the surface is not simply-connected one can see easily
	that the shortest non-contractible closed geodesic is 
	simple. If the surface is simply-connected it is the
	famous result by Lusternik and 
	Schnirelman~\cite{LS} that there
	exist three simple closed geodesics. For a detailed 
	recent proof,
	which also works for reversible Finsler metrics,
	see~\cite{DMMS}. 
	Calabi and Cao have shown that the 
	shortest closed geodesic of a convex surface is simple,
	cf.~\cite{CC}. For a convex surface with 
	a Riemannian metric with sectional
	curvature $K \ge \delta>0$ a simple closed geodesic
	has length $\le 2\pi/\sqrt{\delta},$ this result is due
	to Toponogov, cf. \cite[3.4.10]{Kl}.
	Hence a $C^r$-generic Riemannian metric 
	on $S^2$
	of positive curvature with $r\ge 2$
	has only finitely many 
	geometrically distinct,
	simple closed geodesics. 
	On any convex surface two closed geodesics intersect,
	this statement holds for Riemannian metrics as
	well as for reversible Finsler metrics,
	cf.\cite{Bryant}. On the other hand there are non-reversible Finsler
	metric of positive flag curvature with
	two simple closed geodesics which do not intersect,
	cf. \cite{Ra2017} and \cite{Bryant}.
	
	In case of negative curvature 
	on a surface of genus $g$ the number $N(t)$ of
	closed geodesics of length $\le t$ grows exponentially,
	whereas the number $N_1(t)$ of simple closed geodesics
	of length $\le t$
	grows polynomially of order $6g-6.$ 
	%Here the curvature is not assumed to be constant.
	Mirzakhani has been able to compute the asymptotic growth rate
	for $N_1(t),$
	cf. \cite{EMM}.
\end{remark}
%%%%%%%%%%%%%%%%%%%%%%%%%%%%%%%%%%%%%%%%%%%%%%%%%%%%%%%%%%%%%%%%%%

\end{document}